\title{New Perspectives on the Unimodality of Domination Polynomials}
\author{Mohamed Omar}
\address{Department of Mathematics \& Statistics. York University. 4700 Keele St. Toronto, Canada. M3J 1P3}
\email{omarmo@yorku.ca}
\date{\today}
\subjclass[2020]{05C31, 05C69}
\keywords{dominating sets, domination polynomial, domination roots, unimodality, hypergraph independence polynomial, threshold graphs, total positivity}
\newtheorem{theorem}{Theorem}
\newtheorem{conjecture}[theorem]{Conjecture}
\newtheorem{lemma}[theorem]{Lemma}
\newtheorem{proposition}[theorem]{Proposition}
\newtheorem{corollary}[theorem]{Corollary}
\newtheorem{definition}[theorem]{Definition}
\newtheorem{example}[theorem]{Example}
\numberwithin{equation}{section}
\DeclareMathOperator{\dist}{dist}
\DeclareMathOperator{\argu}{arg}
\newcommand{\VV}{V(G)}
\newcommand{\EE}{E(G)}
\newcommand{\HH}{\mathcal{H}}
\newcommand{\bbC}{\mathbb{C}}
\newcommand{\bbR}{\mathbb{R}}
\begin{document}

\begin{abstract}
The domination polynomial of a graph $G$ is given by $D(G,x)=\sum_{k=0}^{n} d_k(G)x^k$ where $d_k(G)$ records the number of $k$-element dominating sets in $G$. A conjecture of Alikhani and Peng asserts that these polynomials have unimodal coefficient sequences. We develop three complementary perspectives that strengthen existing tools for resolving the conjecture. First, we view dominating sets as transversals of the closed neighborhood hypergraph. Motivated by the relationship between the unimodality of a polynomial and its roots, we use this perspective to expand on known root phenomena for domination polynomials. In particular, we obtain a bound on the modulus of domination roots that is linear in the maximum degree of a graph, improving related exponential bounds of Bencs, Csikv\'{a}ri and Regts. The hypergraph viewpoint also yields explicit combinatorial formulas for top coefficients of $D(G,x)$, extending formulas in the literature and offering fruitful ground for combinatorial approaches to the unimodality conjecture. Second, we strengthen the coefficient-ratio method of Beaton and Brown. This includes tightening their inequalities, and combining a union bound for non-dominating $k$-element sets with an overlap correction based on spanning trees. This produces a new parameter $\tau_k(G)$ measuring maximal pairwise neighborhood overlap and yields an overlap-corrected sufficient criterion for unimodality. Third, we prove that the domination polynomial of threshold graphs are log-concave, and hence unimodal, by a planar network argument from total positivity. This offers a new tactic for resolving the unimodality of hereditary graph classes.
\end{abstract}

\maketitle

\section{Introduction}

Let $G$ be a finite simple undirected graph with vertex set $\VV$, and $n=|\VV|$. A set $S\subseteq \VV$ is \emph{dominating} if every vertex is either in $S$ or adjacent to a vertex of $S$. Let $d_k(G)$ denote the number of dominating sets of size $k$. Alikhani and Peng \cite{AlikhaniPeng2014} introduced the  \emph{domination polynomial} given by
\begin{equation}\label{eq:defDomPoly}
D(G,x) = \sum_{k=0}^{n} d_k(G) x^k,
\end{equation}
which records dominating sets of varying sizes in its coefficients. Domination polynomials have since been studied from several points of view, including recurrence relations and splitting formulas \cite{KotekTittmannTrinks2012}, domination equivalence and reconstruction questions \cite{BeatonPaths2019}, and the distribution of roots \cite{BeatonBrown2022,BeatonBrownRealRoots2021,BencsCsikvariRegts2021}. 

The domination polynomial packages dominating set information in a way that parallels several other graph polynomials. The independence polynomial counts independent sets by size, while the domination polynomial counts hitting sets for closed neighborhoods. Matching polynomials and independence polynomials often exhibit strong analytic phenomena (real-rootedness, log-concavity, sharp zero-free regions) that can be leveraged for coefficient questions. Domination polynomials are less rigid, but their coefficient sequences still appear to have robust shape. One reason is that dominating sets form a large and highly correlated family: in most graphs, once a set is moderately large, it is likely to dominate many vertices simultaneously. The normalized coefficients $r_k(G)$ from \eqref{eq:rkDef} make this probabilistic intuition precise.

Another source of structure comes from graph operations. For disjoint unions one has $D(G_1\cup G_2,x)=D(G_1,x)\,D(G_2,x)$, and for joins there are similar explicit formulas, see \cite{KotekTittmannTrinks2012}.
Such identities allow one to build large families of graphs with controllable domination polynomials from smaller pieces. However, unimodality is not automatically preserved under all natural operations, and controlling the interaction between pieces remains delicate. This is one reason it is helpful to have several complementary approaches to the unimodality conjecture of Alikhani and Peng \cite{AlikhaniPeng2014}, which is one of the long standing conjectures on these polynomials.

\begin{conjecture}[\cite{AlikhaniPeng2014}]\label{conj:unimodal}
For every finite graph $G$, the domination sequence $(d_k(G))_{k=0}^{n}$ is unimodal.
\end{conjecture}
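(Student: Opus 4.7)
The plan is to reduce the unimodality of $(d_k(G))_{k=0}^n$ to one-sided control on consecutive ratios $d_{k+1}(G)/d_k(G)$, and to establish that control by partitioning $k$ into three regimes whose union covers $\{0,1,\dots,n\}$, treated respectively by the three perspectives in the abstract. Unimodality is equivalent to the ratio $d_{k+1}(G)/d_k(G)$ crossing $1$ at most once as $k$ increases, so it suffices to prevent a second crossing in each regime.

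In the \emph{top regime} $k \geq n - c\Delta$, I would use the hypergraph transversal viewpoint: a $k$-set fails to dominate iff it misses the closed neighborhood $N[v]$ of some $v$, so the explicit combinatorial formulas derived in the paper express each $d_{n-j}(G)$ as an inclusion-exclusion sum over small families of closed neighborhoods. For $j$ up to roughly $\Delta+1$ these sums are short enough that a direct estimate should yield log-concavity, and hence unimodality, in this range.

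In the \emph{concentration regime} $k^\star \leq k \leq n - c\Delta$, the input is the strengthened coefficient-ratio bound. Using $d_k(G) \geq \binom{n}{k} - \sum_{v \in V(G)} \binom{n - |N[v]|}{k} + \tau_k(G)$, where the overlap correction $\tau_k(G)$ comes from a spanning tree of $G$, one wants $d_k(G)/\binom{n}{k} = 1 - o(1)$ across the regime. This forces $d_{k+1}(G)/d_k(G) \sim (n-k)/(k+1)$, a strictly decreasing function of $k$ with a unique crossing of $1$ near $n/2$, ruling out a second crossing. The linear-in-$\Delta$ modulus bound on domination roots from the first perspective converts, via a Cauchy-type estimate on coefficients, into the multiplicative control that makes this quantitative.

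The \emph{transitional regime} $k \leq k^\star$ is the main obstacle. Here concentration is too weak to force a unique crossing, the top-coefficient formulas are too far from the active range, and $D(G,x)$ is not total-positive in general: the planar-network argument that handles threshold graphs relies on an interval structure of closed neighborhoods that fails as soon as $G$ contains an induced $2K_2$ or $C_4$. My tentative plan is a structural dichotomy: treat graphs with a small dominating set by conditioning $d_{k+1}/d_k$ on whether that set lies inside the $k$-set, and push $k^\star$ downward for graphs with large domination number by iterating the overlap correction to pairs and triples of closed neighborhoods. Realistically, fully closing this regime will likely require a new certificate of log-concavity on closed-neighborhood hypergraphs, perhaps a Lorentzian-polynomial statement adapted from recent developments in matroid and hypergraph theory.
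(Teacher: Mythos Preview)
The statement you are attempting to prove is a \emph{conjecture}: the paper does not prove it. The three perspectives in the paper yield partial results (a linear-in-$\Delta$ root bound, an overlap-corrected sufficient criterion for unimodality at $k=\lceil n/2\rceil$, and unimodality for threshold graphs), but the paper explicitly notes that Conjecture~\ref{conj:unimodal} remains open even for trees. There is therefore no ``paper's own proof'' to compare against.

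Your proposal is not a proof either, and you essentially say so yourself. The top and concentration regimes are plausible in spirit but already insufficient: the explicit formulas for $d_{n-j}(G)$ are tractable only for $j$ up to a small absolute constant (the paper stops at $j=3$ and remarks that $j=4$ already becomes unwieldy), not up to $c\Delta$; and the concentration bound $d_k(G)/\binom{n}{k}=1-o(1)$ simply fails for sparse graphs such as paths or trees at $k\approx n/2$, which is precisely the hard case. The Cauchy-estimate step is also off: a root bound $|z|\le R$ controls the \emph{growth} of coefficients but does not by itself force $d_{k+1}/d_k$ to track $(n-k)/(k+1)$ to within $o(1)$. Finally, your transitional regime is an open-ended research plan (structural dichotomy, iterated overlap corrections, a hoped-for Lorentzian statement), not an argument. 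In short, you have outlined a strategy that reproduces the known partial progress and then names the remaining obstacle; that is a reasonable summary of the state of the problem, but it is not a proof of the conjecture.
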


In many settings unimodality is a shadow of a stronger property such as log-concavity or real-rootedness, but domination polynomials do not generally enjoy either: even when $G$ is a tree, domination polynomials can have nonreal roots and need not be log-concave \cite{BeatonBrown2022}. Existing proofs of unimodality therefore tend to rely on more delicate combinatorial or probabilistic structure, and even basic families such as trees remain challenging.

The current evidence for Conjecture~\ref{conj:unimodal} comes from several directions. First, the conjecture holds for a number of dense families where domination is ``easy'' in the sense that a random set of size about $n/2$ almost surely dominates. This includes graphs of sufficiently large minimum degree \cite{BeatonBrown2022}. Second, the conjecture has been verified for certain highly structured graph classes, including complete multipartite graphs \cite{BeatonBrown2022} and other families with explicit domination recurrences. It has also been verified for many special families of trees, graphs with maximum degree at most $400$, and persists on other highly specialized families (see \cite{Burcroff2023}). On the other hand, sparse graphs are genuinely subtle. For example, for trees the domination number can be linear in $n$, and the family of dominating sets reflects fine local structure (leaves, supports, and branching). From this point of view, Conjecture~\ref{conj:unimodal} for trees is reminiscent of the long-standing conjecture that the independence polynomial of a tree is unimodal. In both cases, the central coefficients are governed by many competing local configurations.

It is therefore natural to look for methods that are sensitive to local neighborhood structure but still yield global control. The three perspectives developed here aim to do exactly that: Perspective~1 translates domination into a hypergraph independence problem and imports global analytic information. Perspective~2 measures the failure of domination by local avoidance events but corrects for their overlaps. Perspective~3 imports theory from total positivity to establish log-concavity and hence unimodality. Before detailing the results from each perspective, we detail pertinent relevant results in the literature.

One of the most general results to date is due to Beaton and Brown \cite{BeatonBrown2022}. They introduced the normalized coefficients
\begin{equation}\label{eq:rkDef}
r_k(G):=\frac{d_k(G)}{\binom{n}{k}},
\end{equation}
interpretable as the probability that a uniformly random $k$-element subset of $\VV$ is dominating. They proved that $r_k(G)$ is nondecreasing in $k$. Since $\binom{n}{k}$ is increasing for $k\le \lfloor n/2\rfloor$, this implies that $d_k(G)$ is increasing for $k\le \lfloor n/2\rfloor$.

The same paper provides a criterion that forces the tail of the domination sequence to be nonincreasing. In one convenient form, if $k\ge n/2$ and $r_k(G)\ \ge \frac{n-k}{k+1}$, then $d_{k}(G)\ge d_{k+1}(G)\ge \cdots \ge d_n(G)$. Together with the monotonicity of $r_k(G)$, it follows that verifying this inequality at $k=\lceil n/2\rceil$ implies unimodality with mode $\lceil n/2\rceil$. 

A different but closely related line of work concerns the \emph{roots} of domination polynomials. Since $D(G,x)$ has nonnegative coefficients it has no positive real roots, but it can have complex roots with positive real part and with arguments close to $0$. On the real axis, the picture is now essentially complete: Beaton and Brown proved that the closure of the set of real domination roots is $(-\infty,0]$ \cite{BeatonBrownRealRoots2021}. For complex roots, Bencs, Csikv\'ari, and Regts used Wagner's weighted subgraph counting polynomial to obtain a uniform disk containing all roots when the maximum degree is bounded, and they related domination roots to roots of edge cover polynomials \cite{BencsCsikvariRegts2021}. These questions are important due to the relationship between log-concavity, unimodality, and roots of a polynomial.

The purpose of this paper is to develop three complementary perspectives that interact naturally with these themes. The first perspective is that dominating sets can be interpreted in terms of independent sets in an associated hypergraph. For a graph $G$, consider the hypergraph $\HH_G$ whose hyperedges are the closed neighborhoods $N[v]$ for $v \in \VV$. Dominating sets then are precisely complements of hypergraph independent sets in $\HH_G$. This gives the identity $D(G,x)=x^n I(\HH_G,1/x),$ where $I(\HH_G,x)$ is the hypergraph independence polynomial. While the identity is elementary, it is powerful because it allows us to transport recent sharp zero-free results for bounded degree hypergraph independence polynomials to domination polynomials. We use this to obtain a bound on domination roots that is linear in the maximum degree of $G$, and to prove the existence of a uniform root-free neighborhood of a positive ray for all graphs with bounded maximum degree. On the coefficient side, the identity shows that $d_{n-k}(G)$ counts $k$-element independent sets in $\HH_G$, and for small $k$ this yields explicit formulas in terms of local low degree configurations. As a representative result we give a full expression for $d_{n-3}(G)$ valid for all graphs, keeping track of isolated vertices and small components.

The second perspective is an extension of the Beaton-Brown ratio method. Since $r_k(G)$ is the probability that a random $k$-element set is dominating, one wants to bound the probability of failure. A basic union bound produces a quantity $\sigma_k(G)$ such that $r_k(G)\ge 1-\sigma_k(G)$. We use this to strengthen and generalize the degree lower bound condition of Beaton and Brown that guarantees unimodality. We then observe that this bound ignores overlaps among the events that avoid specific neighborhoods; in graphs with repeated neighborhoods those overlaps are substantial. To mitigate for this, we introduce an overlap correction based on spanning trees, encoded in a new term $\tau_k(G)$, yielding $r_k(G)\ge 1-\sigma_k(G)+\tau_k(G)$. This leads to a new sufficient criterion for unimodality at $k=\lceil n/2\rceil$. 

The third perspective shows how unimodality can be inherited from existing theory in enumerative combinatorics. We show that polynomials that are sums of shifted binomials whose exponents follow a prescribed ``staircase" structure are log-concave and hence unimodal. This is established by using a planar network argument from total positivity, exploiting classic Lindstr\"{o}m-Gessel-Viennot theory. We use this in turn to prove that a ubiquitous and well-studied class of graphs, threshold graphs, have unimodal domination polynomials.

The paper is organized into three main sections corresponding to these perspectives, preceded by a brief preliminaries section and followed by a conclusion emphasizing open directions for extending these methods.

\section{Preliminaries}

We introduce preliminaries that will be used throughout. For $v\in\VV$, denote by $N[v]$ the closed neighborhood of $v$, which consists of $v$ and all vertices adjacent to $v$. The minimum and maximum degrees of a graph $G$ are $\delta(G)$ and $\Delta(G)$, respectively. We write $i(G)$ for the number of isolated vertices and $\ell(G)$ for the number of leaves. 

Throughout our work, we use two results from \cite{BeatonBrown2022} as black boxes. These statements will be used repeatedly; we refer to (ii) as the Beaton-Brown tail criterion.
\begin{proposition}\cite{BeatonBrown2022}\label{rem:BB}
Let $G$ be a graph on $n$ vertices and define $r_k(G)$ as in \eqref{eq:rkDef}.

\smallskip\noindent
(i) The sequence $r_0(G),r_1(G),\dots,r_n(G)$ is nondecreasing.

\smallskip\noindent
(ii) If $k\ge n/2$ and $r_k(G)\ge (n-k)/(k+1)$ then $d_k(G)\ge d_{k+1}(G)\ge\cdots\ge d_n(G)$. In particular, if the inequality holds at $k=\lceil n/2\rceil$, then $D(G,x)$ is unimodal with mode $\lceil n/2\rceil$.

\smallskip
\end{proposition}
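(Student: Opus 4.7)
The plan is to leverage the fundamental monotonicity property: the family of dominating sets is upward closed, since any superset of a dominating set is itself dominating. Part (i) should then follow from a standard double counting argument, and part (ii) from comparing $r_k(G)$ to the binomial ratio $\binom{n}{k+1}/\binom{n}{k}$ and propagating the tail inequality.

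For part (i), I would double count incidence pairs $(S,T)$ with $|S|=k$, $|T|=k+1$, $S\subset T$, and $S$ dominating. Since every superset of a dominating set is dominating, each of the $n-k$ supersets of a fixed dominating $S$ lies in the family of dominating $(k+1)$-sets, so these pairs total exactly $(n-k)d_k(G)$. Meanwhile, each dominating $T$ of size $k+1$ contributes at most $k+1$ pairs, depending on how many of its $k$-subsets are dominating. This yields
\begin{equation*}
(n-k)\,d_k(G) \le (k+1)\,d_{k+1}(G),
\end{equation*}
and dividing by $(k+1)\binom{n}{k}$ while using $\binom{n}{k+1} = \frac{n-k}{k+1}\binom{n}{k}$ gives $r_k(G)\le r_{k+1}(G)$.

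For part (ii), I would observe that $d_k(G)\ge d_{k+1}(G)$ is equivalent to $r_k(G)\binom{n}{k}\ge r_{k+1}(G)\binom{n}{k+1}$, and since $r_{k+1}(G)\le 1$, a sufficient condition is $r_k(G)\ge \binom{n}{k+1}/\binom{n}{k} = (n-k)/(k+1)$, which is precisely the hypothesis. To propagate this to every $j\ge k$, I would use that $j\mapsto (n-j)/(j+1)$ is strictly decreasing, so part (i) gives
\begin{equation*}
r_j(G)\ge r_k(G)\ge \frac{n-k}{k+1}\ge \frac{n-j}{j+1},
\end{equation*}
and the same single-step argument then yields $d_j(G)\ge d_{j+1}(G)$ for all $j\ge k$. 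For the unimodality conclusion at $k=\lceil n/2\rceil$, I would combine this tail behavior with the fact that for $j\le \lfloor n/2\rfloor$ both $r_j(G)$ and $\binom{n}{j}$ are nondecreasing, so $d_j(G)=r_j(G)\binom{n}{j}$ is nondecreasing across the head.

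The main subtle point is recognizing the correct direction of the double counting: it is tempting to expect that the incidence count equals both $(n-k)d_k(G)$ and $(k+1)d_{k+1}(G)$, but the genuine asymmetry (every superset of a dominating set is dominating, while not every subset of a dominating set is) is exactly what forces the strict direction of the inequality and drives the monotonicity of $r_k(G)$. Once part (i) is in hand, part (ii) is a short manipulation based on $\binom{n}{k+1}/\binom{n}{k}=(n-k)/(k+1)$, and no deeper structural input on $G$ is needed.
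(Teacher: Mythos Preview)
Your argument is correct in both parts: the double count of incidences $(S,T)$ with $S$ dominating, $|T|=|S|+1$, and $S\subset T$ cleanly exploits upward closure of dominating sets to give $(n-k)d_k(G)\le (k+1)d_{k+1}(G)$, and your propagation of the tail inequality via the monotonicity of $j\mapsto (n-j)/(j+1)$ together with part~(i) is sound. The head argument for unimodality at $k=\lceil n/2\rceil$ is also fine, since $\binom{n}{j}$ is nondecreasing for $j\le k-1$ in both parities of $n$.

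There is nothing to compare against in this paper, however: Proposition~\ref{rem:BB} is quoted verbatim from \cite{BeatonBrown2022} and explicitly used as a black box, with no proof given here. Your write-up is essentially the standard proof one would expect to find in that reference, so it serves well as a self-contained justification, but it neither matches nor differs from anything the present paper supplies.
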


We also recall basic hypergraph terminology. A \emph{hypergraph} $\HH$ consists of a finite vertex set $V(\HH)$ and a collection of hyperedges $E(\HH)\subseteq 2^{V(\HH)}$. An \emph{independent set} in $\HH$ is a set $U\subseteq V(\HH)$ that contains no hyperedge as a subset. The \emph{independence polynomial} is
\[
I(\HH,x)=\sum_{\substack{U\subseteq V(\HH)\\ U\ \text{independent}}} x^{|U|}.
\]
The \emph{maximum degree} $\Delta(\HH)$ is the maximum number of hyperedges containing a single vertex. For a graph $G$, a corresponding hypergraph of relevance in this article is the \emph{closed neighborhood hypergraph} $\HH_G$. This hypergraph has vertex set and edge set given by
\[
V(\HH_G)=\VV,\qquad E(\HH_G)=\{N[v]:v\in\VV\}.
\]
Dominating sets in $G$ are exactly the transversals of $\HH_G$, and this reduction is standard in domination-set enumeration \cite{Kante2014}. More explicitly we see that taking complements, $S\subseteq\VV$ is dominating if and only if $U=\VV\setminus S$ is independent in $\HH_G$ because a dominating set $S$ must intersect $N[v]$ nontrivially for every $v \in \VV$. Summing over all such $U$ gives
\begin{equation}\label{eq:reciprocity}
D(G,x)=\sum_{S \text{ dominating in } G} x^{|S|} = \sum_{U \text{ independent in } \HH_G} x^{n-|U|} = x^n I(\HH_G,1/x).
\end{equation}

\begin{example}
Let $G$ be the path graph on vertex set $\VV=\{1,2,3,4\}$ with edge set $\EE=\{12,23,34\}$. The closed neighborhoods of $G$ give rise to the closed neighborhood hypergraph $\HH_G$ with vertex set $V(\HH_G)=\VV$ and hyperedge set $E(\HH_G)=\{12,34,123,234\}$. The dominating sets in $G$ are 
\[
\{1,3\},\{1,4\},\{2,3\},\{2,4\},\{1,2,3\},\{1,2,4\},\{1,3,4\},\{2,3,4\},\{1,2,3,4\}
\]
so $D(G,x)=x^4+4x^3+4x^2$. The independent sets in $\HH_G$ are
\[
\emptyset, \{1\}, \{2\}, \{3\}, \{4\}, \{1,3\}, \{1,4\},\{2,3\},\{2,4\}
\]
so $I(\HH_G)=4x^2+4x+1$. We see that $x^4I(\HH_G,1/x)=x^4 \cdot \left(\frac{4}{x^2}+\frac{4}{x}+1\right)=D(G,x)$.
\end{example}

The reciprocity identity \eqref{eq:reciprocity} governs our findings in Perspective 1. We make an important note that will be useful throughout: a vertex $u\in\VV=V(\HH_G)$ lies in a hyperedge $N[v]$ if and only if $v\in N[u]$, so $u$ belongs to exactly $\deg_G(u)+1$ hyperedges. Consequently
\begin{equation}\label{eq:maxDegHG}
\Delta(\HH_G)=\Delta(G)+1.
\end{equation}

\section{Perspective 1: the closed neighborhood hypergraph}

\subsection{Top coefficients from low degree structure.}

The identity \eqref{eq:reciprocity} implies that $d_{n-k}(G)$ counts the $k$-element independent sets of $\HH_G$, equivalently the complements of $(n-k)$-element dominating sets. For small $k$, independence in $\HH_G$ is governed by small hyperedges, hence by low degree vertices in $G$. It is this that allows us to access top coefficients of $D(G,x)$ readily.

Two immediate consequences are worth recording. If $0\le k\le \delta(G)$ then every $k$-element subset $U\subseteq\VV$ is independent in $\HH_G$, because every hyperedge has size at least $\delta(G)+1$. Therefore
\begin{equation}\label{eq:topkBinomial}
d_{n-k}(G)=\binom{n}{k}\qquad (0\le k\le \delta(G)).
\end{equation}
Similarly, for $k=\delta(G)+1$ a $(\delta(G)+1)$-element subset fails to be independent if and only if it is exactly a hyperedge $N[v]$ with $\deg(v)=\delta(G)$. Thus
\begin{equation}\label{eq:topDeltaPlusOne}
d_{n-\delta(G)-1}(G)=\binom{n}{\delta(G)+1}-\bigl|\{N[v]:\deg(v)=\delta(G)\}\bigr|.
\end{equation}
These identities are straightforward consequences of \eqref{eq:reciprocity}.

More generally, for a fixed integer $k$, the coefficient $d_{n-k}(G)$ can be computed by inclusion-exclusion over those vertices $v$ with $|N[v]|\le k$, since only those hyperedges can be contained in a $k$-element subset of $\VV$. In graph language this means that $d_{n-k}(G)$ is determined entirely by the local structure around vertices of degree at most $k-1$, together with the way their closed neighborhoods overlap. Theorem~\ref{thm:dn-3} is the first nontrivial instance of this principle. One conceptual advantage of phrasing the computation in $\HH_G$ is that the relevant configurations are naturally organized by the sizes of the hyperedges rather than by ad hoc graph cases.

For $k=1$ and $k=2$ one can easily recover the familiar formulas
\begin{equation}\label{eq:dn-1}
d_{n-1}(G)=n-i(G),
\end{equation}
and
\begin{equation}\label{eq:dn-2}
d_{n-2}(G)=\binom{n}{2}-\Bigl(i(G)(n-1)-\binom{i(G)}{2}-e_1(G)\Bigr),
\end{equation}
where $e_1(G)$ denotes the number of edges incident to a leaf. These appear, for example, in \cite{BeatonPaths2019} (see also the survey discussion there for additional sources).

We now give an explicit expression for $d_{n-3}(G)$ in terms of configurations of degree at most $2$, valid for all graphs. Earlier formulas typically assumed the graph has no isolated vertices and no $K_2$ components; see \cite{BeatonPaths2019}. 

\begin{theorem}\label{thm:dn-3}
Let $G$ be a graph on $n$ vertices. Then
\begin{align}
d_{n-3}(G) &= \binom{n}{3}
-\Bigl(i(G)\binom{n-1}{2}+\ell(G)(n-2)+n_2(G)\Bigr)\notag\\
&\quad+\Bigl(\binom{i(G)}{2}(n-2)+i(G)\ell(G)+e_2(G)(n-2)+s(G)+m(G)+t_{22}(G)\Bigr)\notag\\
&\quad-\Bigl(\binom{i(G)}{3}+i(G)e_2(G)+p_3(G)+c_3(G)\Bigr),\label{eq:dn-3-formula}
\end{align}
where:
$n_2(G)$ is the number of vertices of degree $2$,
$e_2(G)$ is the number of connected components isomorphic to $K_2$,
$s(G)$ is the number of unordered pairs of leaves with a common neighbor,
$m(G)$ is the number of edges joining a leaf to a degree $2$ vertex,
$t_{22}(G)$ is the number of unordered pairs of degree $2$ vertices with equal closed neighborhood,
$p_3(G)$ is the number of connected components isomorphic to the path $P_3$,
and $c_3(G)$ is the number of connected components isomorphic to $K_3$.
\end{theorem}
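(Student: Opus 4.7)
By the reciprocity identity \eqref{eq:reciprocity}, $d_{n-3}(G)$ counts the three-element independent sets of the closed neighborhood hypergraph $\HH_G$. A three-set $U \subseteq \VV$ fails to be independent precisely when $N[v] \subseteq U$ for some $v$, and since $|N[v]| = \deg_G(v)+1$ this is only possible when $\deg(v) \leq 2$. Writing $A_v = \{U \in \binom{\VV}{3} : N[v] \subseteq U\}$ for each such $v$, the plan is to start from
\[
d_{n-3}(G) = \binom{n}{3} - \Bigl|\bigcup_{v:\deg(v)\leq 2} A_v\Bigr|
\]
and evaluate the right-hand side by a finite inclusion--exclusion. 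A key structural observation is that the expansion truncates after the third level: if four distinct vertices $v_1,\dots,v_4$ all satisfied $N[v_i]\subseteq U$ then in particular all four $v_i$ would lie in the same three-element set $U$, which is impossible.

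The first level is direct: $|A_v|$ equals $\binom{n-1}{2}$, $n-2$, or $1$ according as $\deg(v)$ is $0$, $1$, or $2$, so the singleton sum is exactly the first bracketed expression in \eqref{eq:dn-3-formula}. For the second level I would enumerate unordered pairs of vertices by degree type. In each of the six resulting cases the condition $|N[v]\cup N[w]|\leq 3$ unpacks cleanly into a local graph condition: two isolated vertices always qualify (union of size two), an isolated vertex paired with a leaf always qualifies, an isolated vertex paired with a degree-two vertex never qualifies, two leaves qualify either as a $K_2$ component (union of size two) or by having a genuine common neighbor (union of size three), a leaf and a degree-two vertex qualify exactly when they are adjacent, and two degree-two vertices qualify exactly when they share a closed neighborhood. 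These six cases account for the six terms of the second bracketed expression in \eqref{eq:dn-3-formula}.

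The third level is the heart of the calculation and where the most care is needed. Here one must classify triples of degree-$\leq 2$ vertices whose three closed neighborhoods lie in a common three-set. Iterating over the ten degree-type patterns, six are ruled out immediately by a size argument combined with forced disjointness (an isolated vertex contributes a singleton that no larger closed neighborhood can absorb) or with leaf constraints (three leaves would force a $1$-regular graph on three vertices). Of the four surviving patterns, three isolated vertices contribute $\binom{i(G)}{3}$; an isolated vertex with two leaves forces the two leaves to share a closed neighborhood and hence to form a $K_2$ component, contributing $i(G)e_2(G)$; two leaves with a degree-two vertex must form a $P_3$ component with the degree-two vertex at its center, contributing $p_3(G)$; and three degree-two vertices with a common three-element closed neighborhood must mutually dominate and therefore form a $K_3$ component, contributing $c_3(G)$.

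Assembling the three levels as $\binom{n}{3} - S_1 + S_2 - S_3$ yields \eqref{eq:dn-3-formula}. The main obstacle is organizational rather than conceptual: the triple case analysis must be performed exhaustively, and in each of the four surviving patterns one must verify that the forced configuration is a full connected component (not merely a subgraph), so that the parameters $e_2(G)$, $p_3(G)$, $c_3(G)$ defined via components give the correct counts. A sanity check against small graphs such as $K_3$, $P_3$, $P_4$, and disjoint unions with isolated vertices would confirm the bookkeeping.
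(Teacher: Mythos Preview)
Your proposal is correct and follows essentially the same approach as the paper: both compute $d_{n-3}(G)$ as the number of three-element independent sets in $\HH_G$ via inclusion--exclusion over the events $A_v=\{U:N[v]\subseteq U\}$ with $\deg(v)\le 2$, and both carry out the same case analysis at each level. Your write-up is in fact slightly more systematic than the paper's---you explicitly note that the expansion truncates at level three, enumerate all ten degree-type patterns at that level, and flag the need to verify that the surviving triples are full components---but the underlying argument is identical.
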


\begin{proof}
By \eqref{eq:reciprocity}, $d_{n-3}(G)$ is the number of $3$-element independent subsets in the hypergraph $\HH_G$. Equivalently, if $U\subseteq\VV$ with $|U|=3$ and $U$ is called \emph{bad} when it contains a hyperedge $N[v]$ as a subset, then
\[
d_{n-3}(G)=\binom{n}{3}-|\{U\subseteq\VV: U \text{ is bad }\}|.
\]
A hyperedge $N[v]$ has size $\deg(v)+1$, so $N[v]\subseteq U$ with $|U|=3$ forces $\deg(v)\le 2$. For each vertex $v$ with $\deg(v)\le 2$, let $A_v$ denote the event that $N[v]\subseteq U$. Then $U$ is bad if and only if at least one event $A_v$ occurs. We compute the number of bad $U$ by inclusion-exclusion over the events $A_v$.

If $v$ is isolated, then $N[v]=\{v\}$ and $A_v$ occurs precisely when $v\in U$. For each isolated $v$ there are $\binom{n-1}{2}$ choices for $U$, hence isolated vertices contribute $i(G)\binom{n-1}{2}$ to the count of bad sets. If $v$ is a leaf, with unique neighbor $w$, then $N[v]=\{v,w\}$ and $A_v$ occurs precisely when $\{v,w\}\subseteq U$, leaving $n-2$ choices for the third element. Summing over all leaves contributes $\ell(G)(n-2)$. If $v$ has degree $2$ with neighbors $u$ and $w$, then $N[v]=\{u,v,w\}$ and there is exactly one $3$-element set containing it, so each vertex of degree $2$ contributes $1$, giving $n_2(G)$. The total single event contribution is therefore
\[
i(G)\binom{n-1}{2}+\ell(G)(n-2)+n_2(G).
\]

Next we adjust this count by considering $A_u\cap A_v$ for distinct vertices $u$ and $v$ of degree at most $2$. Because $|U|=3$, this intersection can occur only when $|N[u]\cup N[v]|\le 3$, and in each admissible case the resulting $U$ is forced or nearly forced. If $u$ and $v$ are distinct isolated vertices, then $N[u]\cup N[v]=\{u,v\}$ and $A_u\cap A_v$ occurs precisely when $\{u,v\}\subseteq U$. There are $n-2$ choices for the third vertex, contributing $\binom{i(G)}{2}(n-2)$ over all such pairs. If $u$ is isolated and $v$ is a leaf, then $A_u\cap A_v$ forces $U$ to contain $u$ and the edge $N[v]$, hence $U$ is uniquely determined. The number of choices is $i(G)\ell(G)$. If $u$ and $v$ are leaves, then $|N[u]\cup N[v]|\le 3$ forces either that they are adjacent (forming a $K_2$ component) or that they share a common neighbor. If they are adjacent then $N[u]\cup N[v]=\{u,v\}$ and $A_u\cap A_v$ occurs when $U$ contains $u$ and $v$; the third element is arbitrary among the remaining $n-2$ vertices. Summing over $K_2$ components contributes $e_2(G)(n-2)$. If they are not adjacent but share a common neighbor $w$, then $N[u]\cup N[v]=\{u,v,w\}$ and $A_u\cap A_v$ forces $U=\{u,v,w\}$ uniquely. The number of such unordered pairs of leaves is $s(G)$. If one vertex is a leaf and the other has degree $2$, then for $|N[u]\cup N[v]|\le 3$ to hold the leaf must be adjacent to the vertex of degree $2$. In this case the union of neighborhoods equals the closed neighborhood of the vertex of degree $2$ and again forces $U$ uniquely. Each edge joining a leaf to a vertex of degree $2$ yields one such intersection, contributing $m(G)$. Finally, if $u$ and $v$ are both vertices with degree $2$, then $A_u\cap A_v$ can occur only if $N[u]=N[v]$, in which case $U$ is forced to equal this common neighborhood. The number of unordered pairs of degree $2$ vertices with identical closed neighborhood is $t_{22}(G)$. No other pair of vertices can yield $A_u\cap A_v$, because any other combination forces $|N[u]\cup N[v]|\ge 4$. Summing the admissible cases, the total contribution of all pairwise intersections is
\[
\binom{i(G)}{2}(n-2)+i(G)\ell(G)+e_2(G)(n-2)+s(G)+m(G)+t_{22}(G).
\]

Finally we adjust for triple intersections $A_u\cap A_v\cap A_w$. This can occur only when $U=N[u]\cup N[v]\cup N[w]$ has size at most $3$, and therefore when the three vertices form a union of connected components in $G$ whose closed neighborhoods stay within the triple. There are exactly four possibilities. If $u,v,w$ are all isolated, then the intersection holds for the unique set $U=\{u,v,w\}$, contributing $\binom{i(G)}{3}$ to the count. If two of the vertices form a $K_2$ component and the third is isolated, then the intersection corresponds to choosing a $K_2$ component and an isolated vertex, contributing $i(G)e_2(G)$. If $\{u,v,w\}$ forms a $P_3$ component, then all three closed neighborhoods are contained in $\{u,v,w\}$ and the intersection contributes $p_3(G)$. If $\{u,v,w\}$ forms a $K_3$ component, the same holds and contributes $c_3(G)$. No other triple intersections occur, because any other configuration forces at least one closed neighborhood to contain a vertex outside $\{u,v,w\}$.

Applying inclusion--exclusion yields \eqref{eq:dn-3-formula}.
\end{proof}

The same method extends in principle to $d_{n-4}(G)$ and beyond, but the number of admissible local configurations grows quickly. For $d_{n-4}$ one already encounters interactions among degree $3$ neighborhoods, and a tractable closed formula typically requires structural restrictions (such as excluding isolated vertices and $K_2$ components); see \cite{BeatonPaths2019} for a detailed discussion of such formulas and their use in domination equivalence.

\subsection{Roots via zero-free regions for $I(\HH,x)$.}

We now turn to roots. The identity \eqref{eq:reciprocity} reduces domination roots to zeros of a hypergraph independence polynomial. Recent work gives sharp, uniform zero-free regions for $I(\HH,x)$ in terms of the hypergraph maximum degree \cite{BencsBuys2025}. Using \eqref{eq:maxDegHG}, these results transfer to bounds on roots of domination polynomials in terms of $\Delta(G)$.

Our first result gives a modulus bound that is linear in $\Delta(G)$ for domination roots. For comparison, Bencs, Csikv\'ari, and Regts proved that if $G$ has no isolated vertices and $\Delta(G)\le \Delta$, then all roots of $D(G,x)$ lie in the disk $|z|\le 2^{\Delta+1}$ with radius bounded by an exponential in $\Delta$ \cite{BencsCsikvariRegts2021}. 

\begin{theorem}\label{thm:linearRootBound}
Let $G$ be a graph with maximum degree $\Delta=\Delta(G)\ge 1$. If $z$ is a complex root of $D(G,x)$, then $|z|<e(\Delta+1).$
\end{theorem}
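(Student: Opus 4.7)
The plan is to transfer a Shearer-type zero-free region for hypergraph independence polynomials to domination polynomials through the reciprocity identity \eqref{eq:reciprocity}. If $z\neq 0$ is a root of $D(G,x)$, then $D(G,z)=z^n\,I(\HH_G,1/z)$ forces $I(\HH_G,1/z)=0$, so $1/z$ is a root of the independence polynomial of the closed neighborhood hypergraph $\HH_G$. The case $z=0$ is immediate since $e(\Delta+1)>0$. Thus the bound $|z|<e(\Delta+1)$ reduces to proving that $I(\HH_G,x)$ is nonvanishing in an open disk centered at $0$ of radius $\tfrac{1}{e(\Delta+1)}$.

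Next I would invoke the sharp zero-free region of Bencs and Buys \cite{BencsBuys2025} for hypergraph independence polynomials in terms of $\Delta(\HH)$. Their result (which refines Shearer-style bounds from the graph setting) guarantees $I(\HH,x)\neq 0$ in an explicit disk whose radius scales as $\tfrac{1}{e\Delta(\HH)}$. Applying this to $\HH=\HH_G$ and using the identity $\Delta(\HH_G)=\Delta(G)+1$ from \eqref{eq:maxDegHG}, every root $\alpha$ of $I(\HH_G,x)$ satisfies $|\alpha|>\tfrac{1}{e(\Delta+1)}$. Inverting this bound for $\alpha=1/z$ yields $|z|<e(\Delta+1)$ as claimed.

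The main obstacle is matching the numerical constant $e$ to the exact form of the Bencs-Buys bound. Zero-free radii for hypergraph independence polynomials appear in several inequivalent formulations (for instance $\tfrac{(D-1)^{D-1}}{D^D}$ versus the simpler surrogate $\tfrac{1}{eD}$, and with or without an additional dependence on maximum hyperedge size), so calibrating the chosen form to produce exactly the constant $e$ requires care. One must also verify strictness at the boundary; if the extremal case $|z|=e(\Delta+1)$ is not automatically excluded by the hypergraph theorem, a short continuity or compactness argument (or a mild slack in the radius) should close the gap. Once this calibration is done, the linear-in-$\Delta$ improvement over the exponential bound $2^{\Delta+1}$ of \cite{BencsCsikvariRegts2021} is essentially automatic, reflecting the fact that the hypergraph $\HH_G$ captures the combinatorial structure governing domination roots more efficiently than the weighted subgraph counts used in the earlier work.
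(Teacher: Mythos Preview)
Your approach is exactly the paper's: apply the Bencs--Buys zero-free disk $|\lambda|<\lambda_s(M)=\tfrac{(M-1)^{M-1}}{M^M}$ to $\HH_G$ with $M=\Delta(\HH_G)=\Delta+1$, then invert via \eqref{eq:reciprocity}. The calibration you flag as the main obstacle is the one-line estimate $|z|\le \tfrac{(\Delta+1)^{\Delta+1}}{\Delta^{\Delta}}=(\Delta+1)\bigl(1+\tfrac{1}{\Delta}\bigr)^{\Delta}<e(\Delta+1)$, which already gives the strict inequality, so no separate boundary or compactness argument is needed.
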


\begin{proof}
Let $z\neq 0$ satisfy $D(G,z)=0$. Bencs and Buys proved that if $\HH$ is a hypergraph with maximum degree at most $M$, then $I(\HH,\lambda)\neq 0$ whenever
\begin{equation}\label{eq:shearerDisk}
|\lambda|<\lambda_s(M):=\frac{(M-1)^{M-1}}{M^{M}},
\end{equation}
and that this radius is optimal \cite{BencsBuys2025}. By \eqref{eq:maxDegHG}, the hypergraph $\HH_G$ has maximum degree $\Delta(\HH_G)=\Delta+1$, so applying the Bencs and Buys result with $M=\Delta+1$ gives that $I(\HH_G,\lambda)\neq 0$ for all $|\lambda|<\lambda_s(\Delta+1)=\Delta^{\Delta}/(\Delta+1)^{\Delta+1}$. By \eqref{eq:reciprocity}, since $z$ is a root of $D(G,z)=0$, $1/z$ is a root of $I(\HH_G,x)=0$, so \eqref{eq:shearerDisk} implies $\left|\frac{1}{z}\right| \geq \frac{\Delta^{\Delta}}{(\Delta+1)^{\Delta+1}},$ and therefore $|z| \leq \left(1+\frac{1}{\Delta}\right)^\Delta \cdot (1+\Delta)$. It is classical that $\left(1+\frac{1}{\Delta}\right)^\Delta<e$ for $\Delta\ge 1$, and the result follows.
\end{proof}

We next prove that for bounded degree graphs, domination roots avoid a uniform neighborhood of a positive ray beyond an explicit threshold. This is a complex analytic strengthening of the trivial observation that there are no positive real roots: it rules out roots approaching the positive real axis with modulus bounded away from zero, uniformly over all graphs of bounded maximum degree.

\begin{theorem}\label{thm:rayZeroFree}
Fix an integer $\Delta\ge 2$ and define $T_{\Delta}=\frac{(\Delta-1)^{\Delta+1}}{\Delta^{\Delta}}.$ There exists an open set $\Omega_{\Delta}\subseteq \bbC$ containing the ray $(T_{\Delta},\infty)$ such that for every graph $G$ with $\Delta(G)\le \Delta$ one has $D(G,z)\neq 0$ for all $z\in \Omega_{\Delta}$.
\end{theorem}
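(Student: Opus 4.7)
The plan is to pull the statement back through the reciprocity identity \eqref{eq:reciprocity} to a zero-free region statement for the hypergraph independence polynomial, then invoke a Peters--Regts-style sharpening of the Bencs--Buys disk used in the proof of Theorem~\ref{thm:linearRootBound}. A strong hint that this is the right route is that the reciprocal $1/T_\Delta=\Delta^\Delta/(\Delta-1)^{\Delta+1}$ is precisely the classical hard-core critical activity on the infinite $(\Delta+1)$-regular tree, which is the universal sharp zero-free threshold appearing in Peters--Regts-type theorems for graphs of maximum degree at most $\Delta+1$.

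The first step is to reduce to $\delta(G)\ge 1$: an isolated vertex $v$ must lie in every dominating set of $G$, so $D(G,x)=x\cdot D(G-v,x)$, and the nonzero roots of $D(G,x)$ coincide with those of $D(G-v,x)$, while $\Delta(G-v)\le\Delta$. Since the $\Omega_\Delta$ I will construct stays bounded away from $0$, this reduction costs nothing, and after it every hyperedge of $\HH_G$ has size at least $2$. Next, under the inversion $\lambda=1/z$, the ray $(T_\Delta,\infty)$ corresponds to the open interval $J_\Delta=(0,1/T_\Delta)$ in the $\lambda$-plane. I would then invoke a hypergraph version of the Peters--Regts theorem to obtain an open set $W_\Delta\subset\bbC$ containing $J_\Delta$ such that $I(\HH,\lambda)\neq 0$ for every hypergraph $\HH$ with $\Delta(\HH)\le\Delta+1$ and all hyperedges of size at least $2$, and every $\lambda\in W_\Delta$. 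By \eqref{eq:maxDegHG}, this covers $\HH_G$ for all $G$ in the reduced family. Setting $\Omega_\Delta=\{z\in\bbC\setminus\{0\}:1/z\in W_\Delta\}$ produces the required open set: $z\mapsto 1/z$ is an open map sending $(T_\Delta,\infty)$ into $J_\Delta\subset W_\Delta$, so $(T_\Delta,\infty)\subset\Omega_\Delta$, and \eqref{eq:reciprocity} then gives $D(G,z)=z^n I(\HH_G,1/z)\neq 0$ for every $z\in\Omega_\Delta$.

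The main obstacle is the sharp zero-free region for $I(\HH,\lambda)$ near $J_\Delta$. The Bencs--Buys disk used in Theorem~\ref{thm:linearRootBound} has radius $\Delta^\Delta/(\Delta+1)^{\Delta+1}$, which is strictly smaller than $1/T_\Delta$, so a stronger input is required. The natural approach is to adapt the Peters--Regts contraction argument for the tree recurrence satisfied by ratios of rooted independence polynomials to the hypergraph setting with hyperedges of size at least $2$; the complication here is that the recurrence now mixes contributions from hyperedges of varying sizes, rather than being purely binary as in the graph case. A more elementary fallback would be to combine the Bencs--Buys disk with a Montel/normal-families argument exploiting positivity of $I(\HH,\lambda)$ on $[0,\infty)$; this would yield some uniform open zero-free neighborhood of the positive ray for bounded-degree hypergraphs, but the explicit quantitative identification with $T_\Delta$ would be lost.
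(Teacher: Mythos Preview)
Your route is exactly the paper's: invert through \eqref{eq:reciprocity}, find an open zero-free neighborhood of $(0,1/T_\Delta)$ for $I(\HH,\lambda)$ uniformly over hypergraphs of maximum degree at most $\Delta+1$, and pull it back by $z\mapsto 1/z$ to obtain $\Omega_\Delta$. The only issue is that you treat the required zero-free neighborhood as an open problem needing a fresh Peters--Regts adaptation or a Montel fallback, when in fact the same reference \cite{BencsBuys2025} already supplies it. In addition to the Shearer disk $|\lambda|<\lambda_s(M)$ used in Theorem~\ref{thm:linearRootBound}, Bencs and Buys prove that for every integer $M\ge 3$ there exists an open neighborhood $U_M$ of the real interval $(0,\lambda_c(M))$, with $\lambda_c(M)=(M-1)^{M-1}/(M-2)^M$, on which $I(\HH,\lambda)\neq 0$ for all hypergraphs $\HH$ with $\Delta(\HH)\le M$. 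Setting $M=\Delta+1$ gives $\lambda_c(\Delta+1)=\Delta^\Delta/(\Delta-1)^{\Delta+1}=1/T_\Delta$, and the paper's proof is then a direct black-box application of this with no further analytic work. Your reduction to $\delta(G)\ge 1$ is also unnecessary, since the cited result places no lower bound on hyperedge sizes; it is harmless but redundant.
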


\begin{proof}
Let $M=\Delta+1$. Bencs and Buys proved that for every integer $M\ge 3$ there exists an open neighborhood $U_M$ of the interval $(0,\lambda_c(M))$, where $\lambda_c(M)=\frac{(M-1)^{M-1}}{(M-2)^{M}},$ such that $I(\HH,\lambda)\neq 0$ for all $\lambda\in U_M$ and all hypergraphs $\HH$ with $\Delta(\HH)\le M$ \cite{BencsBuys2025}. Since $\Delta\ge 2$ implies $M=\Delta+1\ge 3$, we may apply this with $M=\Delta+1$. Writing $L_{\Delta}:=\lambda_c(\Delta+1)=\Delta^{\Delta}/(\Delta-1)^{\Delta+1}$, we have $L_{\Delta}=1/T_{\Delta}$. Define
\[
\Omega_{\Delta}:=\{z\in\bbC\setminus\{0\}:1/z\in U_M\}.
\]
Since inversion is continuous on $\bbC\setminus\{0\}$, the set $\Omega_{\Delta}$ is open. Because $U_M$ contains $(0,L_{\Delta})$, the set $\Omega_{\Delta}$ contains $(T_{\Delta},\infty)$. Now let $G$ satisfy $\Delta(G)\le \Delta$. Then $\Delta(\HH_G)\le \Delta+1$ by \eqref{eq:maxDegHG}. If $z\in\Omega_{\Delta}$ then $1/z\in U_M$, hence $I(\HH_G,1/z)\neq 0$. By \eqref{eq:reciprocity}, $D(G,z)=z^{n}I(\HH_G,1/z)\neq 0$.
\end{proof}

The next theorem extracts a uniform angular separation around the positive real axis beyond any fixed radius $r>T_{\Delta}$. It is a compactness argument made quantitative via the stability of inversion near the positive real axis.

\begin{theorem}\label{thm:angularSeparation}
Fix $\Delta\ge 2$ and $r>T_{\Delta}$. There exists $\theta=\theta(\Delta,r)>0$ such that for every graph $G$ with $\Delta(G)\le \Delta$ the domination polynomial $D(G,x)$ has no root $z$ satisfying $|z|\ge r$ and $|\argu(z)|<\theta$.
\end{theorem}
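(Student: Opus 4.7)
The plan is to reduce the statement to a compactness argument. The key trick is to use the linear modulus bound from Theorem~\ref{thm:linearRootBound} to truncate the wedge at $R:=e(\Delta+1)$, so that the remaining region is a compact annular sector; then transfer this compact region across the inversion $z\mapsto 1/z$ underlying \eqref{eq:reciprocity}. Set $M=\Delta+1\ge 3$ and let $U_M$ denote the Bencs-Buys open neighborhood of $(0,\lambda_c(M))=(0,L_{\Delta})$ with $L_{\Delta}=1/T_{\Delta}$. Because every root of $D(G,x)$ has modulus strictly less than $R$ by Theorem~\ref{thm:linearRootBound}, it suffices to find $\theta>0$ such that for every $G$ with $\Delta(G)\le\Delta$ the compact annular sector
\[
A_{\theta}:=\{z\in\bbC:\ r\le|z|\le R,\ |\argu(z)|\le\theta\}
\]
contains no root of $D(G,x)$.

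Next I pass to the $w=1/z$ plane. The image of $A_{\theta}$ under inversion is the compact set
\[
K_{\theta}:=\{w\in\bbC:\ 1/R\le|w|\le 1/r,\ |\argu(w)|\le\theta\}.
\]
At $\theta=0$ this degenerates to the closed interval $[1/R,1/r]$. Since $r>T_{\Delta}$ gives $1/r<L_{\Delta}$, and $1/R>0$, this interval lies in $(0,L_{\Delta})\subseteq U_M$. Openness of $U_M$ together with compactness of $[1/R,1/r]$ produces $\delta>0$ whose $\delta$-tube around $[1/R,1/r]$ in $\bbC$ is contained in $U_M$. A crude arc-length estimate $|se^{i\phi}-s|\le s|\phi|$ shows that $K_{\theta}$ lies inside this tube as soon as $\theta\le r\delta$, and fixing any such $\theta$ places $K_{\theta}\subseteq U_M$.

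With this choice of $\theta$, the Bencs-Buys theorem applied to $\HH_G$, whose maximum degree is at most $M$ by \eqref{eq:maxDegHG}, gives $I(\HH_G,w)\neq 0$ for every $w\in K_{\theta}$ and every $G$ with $\Delta(G)\le\Delta$. The reciprocity identity \eqref{eq:reciprocity} then yields $D(G,z)\neq 0$ on $A_{\theta}$, completing the argument. Uniformity in $G$ is automatic because $U_M$ depends only on $M=\Delta+1$. The only subtle point, and the reason Theorem~\ref{thm:linearRootBound} enters the argument, is that truncating at radius $R$ is essential: without the outer cap on $|z|$, the image sector would accumulate at the origin, where $U_M$ is not known to be a full neighborhood in $\bbC$, and a direct compactness argument would break down.
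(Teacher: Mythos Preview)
Your argument is correct. It differs from the paper's route in how it handles large $|z|$: the paper does not invoke Theorem~\ref{thm:linearRootBound} at all, but instead takes the compact interval $K=[0,1/r]$ in the $w$-plane, finds $\varepsilon>0$ with the $\varepsilon$-neighborhood of $K$ inside $U_M$, and uses the single estimate $\bigl|\,1/z-1/|z|\,\bigr|=(2/|z|)\sin(|\argu z|/2)\le(2/r)\sin(\theta/2)$ to push every $z$ with $|z|\ge r$ and $|\argu z|<\theta$ into that neighborhood in one stroke, with no outer cap needed. Your concern about the origin is legitimate---the paper quietly uses $0\in U_M$, which is not literally part of the Bencs--Buys statement as quoted in Theorem~\ref{thm:rayZeroFree}---but calling the truncation ``essential'' overstates it: one can harmlessly enlarge $U_M$ by adjoining the Shearer disk $\{|\lambda|<\lambda_s(M)\}$ from \eqref{eq:shearerDisk} (or simply note $I(\HH,0)=1$) to absorb a neighborhood of $0$. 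So what your route buys is a clean sidestep of that small point at the price of an extra black-box input; the paper's route is shorter and independent of Theorem~\ref{thm:linearRootBound}, but tacitly needs a word about why $0$ may be included in the zero-free set.
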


\begin{proof}
Let $M=\Delta+1$ and $U_M$ be the open set from the proof of Theorem~\ref{thm:rayZeroFree}. Since $r>T_{\Delta}$, we have $1/r<L_{\Delta}$, where $L_{\Delta}=1/T_{\Delta}$. Consider the compact interval
\[
K:=\left[0,\frac{1}{r}\right]\subseteq \bbR.
\]
Because $U_M$ is open and contains $K$, there exists $\varepsilon>0$ such that the open $\varepsilon$-neighborhood of $K$ is contained in $U_M$. In other words, if $\lambda\in\bbC$ satisfies $\dist(\lambda,K)<\varepsilon$, then $\lambda\in U_M$. Now choose $\theta>0$ such that
\begin{equation}\label{eq:thetaChoice}
\frac{2}{r}\sin(\theta/2)<\varepsilon.
\end{equation}
We show that for any $z\in\bbC$ satisfying $|z|\ge r$ and $|\argu(z)|<\theta$, $D(G,z) \neq 0$. Write $z=|z|e^{i\phi}$ with $|\phi|<\theta$. Then $\frac{1}{z}=\frac{1}{|z|}e^{-i\phi}.$ Let $t:=1/|z|$, so $0<t\le 1/r$ and hence $t\in K$. We estimate
\[
\left|\frac{1}{z}-t\right|
=t\left|e^{-i\phi}-1\right|
=t\cdot 2\sin(|\phi|/2)
\le \frac{2}{|z|}\sin(\theta/2)
\le \frac{2}{r}\sin(\theta/2)
<\varepsilon
\]
by \eqref{eq:thetaChoice}. Thus $\dist(1/z,K)<\varepsilon$, so $1/z\in U_M$. Equivalently $z\in\Omega_{\Delta}$, where $\Omega_{\Delta}$ is as in Theorem~\ref{thm:rayZeroFree}.

Now let $G$ satisfy $\Delta(G)\le \Delta$. By Theorem~\ref{thm:rayZeroFree}, $D(G,z)\neq 0$ for all $z\in\Omega_{\Delta}$. In particular, $D(G,z)\neq 0$ for all $z$ with $|z|\ge r$ and $|\argu(z)|<\theta$.
\end{proof}

Theorems~\ref{thm:linearRootBound} through \ref{thm:angularSeparation} highlight a general theme: domination polynomials inherit zero-free regions from hypergraph independence polynomials through the transformation \eqref{eq:reciprocity}. The results of \cite{BencsBuys2025} are optimal in several senses, so the explicit thresholds in Theorems~\ref{thm:linearRootBound} and \ref{thm:rayZeroFree} are unlikely to be significantly improved without incorporating additional structure of the specific closed neighborhood hypergraphs that arise from graphs. Identifying graph classes whose closed neighborhood hypergraphs have stronger analytic properties is an attractive direction.

\section{Perspective 2: overlap-corrected inequalities for unimodality}

This section develops lower bounds on $r_k(G)$ by bounding the number of \emph{non-dominating} $k$-element subsets of $\VV$. The novelty is an overlap correction that strengthens the naive union bound in graphs with repeated or highly overlapping neighborhoods. One of the two key statistics controlling this behavior is the following:
\begin{definition}\label{def:sigma_k}
For $0 \leq k \leq n$, define
\begin{equation}\label{eq:sigmaDef} 
\sigma_k(G):=\sum_{v\in\VV}\frac{\binom{n-|N[v]|}{k}}{\binom{n}{k}},
\end{equation}
\end{definition}

Now fix a graph $G$ on $n$ vertices and an integer $0\le k\le n$. For each $v\in\VV$ define
\[
A_v:=\{S\subseteq\VV:|S|=k,\ S\cap N[v]=\varnothing\}.
\]
If $S\in A_v$, then $S$ is not a dominating set because it does not contain $v$ or any of its neighbors. Consequently, the family of non-dominating $k$-element sets is contained in $\bigcup_{v\in\VV} A_v$. A direct union bound on $\bigcup_{v\in\VV} A_v$ yields the following inequality.

\begin{proposition}\label{prop:unionBound}
For every graph $G$ on $n$ vertices and every $k$, 
\[
\displaystyle d_k(G)\ge\binom{n}{k}-\sum_{v\in\VV} \binom{n-|N[v]|}{k}.
\] 
Equivalently, $r_k(G)\ge 1-\sigma_k(G)$.
\end{proposition}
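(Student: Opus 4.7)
The plan is to compute (or rather, upper-bound) the number of non-dominating $k$-element subsets of $\VV$ via a union bound over the events $A_v$, and then subtract from $\binom{n}{k}$. The setup in the paragraph preceding the proposition already identifies the correct framework: $A_v$ is defined as the family of $k$-subsets $S$ disjoint from $N[v]$, and the crucial observation I will start with is the set-theoretic equality
\[
\{S\subseteq\VV:|S|=k,\ S\ \text{is not dominating}\}=\bigcup_{v\in\VV}A_v.
\]
The inclusion ``$\supseteq$'' is exactly the remark already made: $S\in A_v$ forces $v$ to be undominated. For ``$\subseteq$'', if $S$ is not dominating, some vertex $v$ is not covered, meaning $v\notin S$ and $v$ has no neighbor in $S$, i.e.\ $S\cap N[v]=\varnothing$, so $S\in A_v$.

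Next I will count $|A_v|$. A set $S\in A_v$ is any $k$-subset of $\VV\setminus N[v]$, and $|\VV\setminus N[v]|=n-|N[v]|$, so $|A_v|=\binom{n-|N[v]|}{k}$, with the convention that $\binom{m}{k}=0$ when $m<k$ (so vertices with $|N[v]|>n-k$ contribute nothing, as expected). Applying the standard union bound then gives
\[
\binom{n}{k}-d_k(G)=\Bigl|\bigcup_{v\in\VV}A_v\Bigr|\le\sum_{v\in\VV}|A_v|=\sum_{v\in\VV}\binom{n-|N[v]|}{k},
\]
which is the first displayed inequality. Dividing both sides by $\binom{n}{k}$ and using the definition \eqref{eq:sigmaDef} of $\sigma_k(G)$ yields the equivalent form $r_k(G)\ge 1-\sigma_k(G)$.

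There is no substantive obstacle here: the argument is purely a union bound, and the only thing to be careful about is the bookkeeping around vertices with $|N[v]|>n-k$, which is handled automatically by the binomial coefficient convention. The real content of this proposition is conceptual rather than technical, namely the reformulation of non-domination as a union of local avoidance events $A_v$; this is precisely the formulation that will later be refined by the overlap correction $\tau_k(G)$ once the inequality $|\bigcup_v A_v|\le\sum_v|A_v|$ is replaced by a Bonferroni-type correction that accounts for pairwise intersections $A_u\cap A_v$.
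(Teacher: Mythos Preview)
Your proof is correct and follows essentially the same approach as the paper: identify the non-dominating $k$-sets with $\bigcup_{v}A_v$, compute $|A_v|=\binom{n-|N[v]|}{k}$, and apply the union bound. The only minor difference is that you establish the full set-theoretic equality between the non-dominating $k$-sets and $\bigcup_v A_v$, whereas the paper uses only the containment direction; either suffices here.
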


\begin{proof}
Every non-dominating $k$-element set lies in $\bigcup_{v\in\VV} A_v$, hence $d_k(G) \ge \binom{n}{k}-\left|\bigcup_{v\in\VV}A_v\right|.$ For each $v$, the sets in $A_v$ are precisely the $k$-element subsets of $\VV\setminus N[v]$, so $|A_v|=\binom{n-|N[v]|}{k}$. Substituting together with $|\bigcup_{v \in \VV} A_v| \leq \sum_{v \in \VV} |A_v|$ completes the proof.
\end{proof}

The quantity $\sigma_k(G)$ depends on the entire multiset of neighborhood sizes. It can be bounded in terms of a simpler degree sequence proxy
\begin{equation}\label{eq:sigmaSmallDef}
\sigma(G):=\sum_{v\in\VV}2^{-\deg(v)-1}
\end{equation}
as established in the following proposition.

\begin{proposition}\label{prop:sigmaCompare}
Let $G$ be a graph on $n$ vertices and set $k=\lfloor n/2\rfloor$. Then $\sigma_k(G)\le 2\sigma(G)$.
\end{proposition}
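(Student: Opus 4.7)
The plan is to prove the inequality vertex by vertex. Setting $d = \deg(v)$ so that $|N[v]| = d+1$, I will establish the pointwise bound
\[
\frac{\binom{n-d-1}{k}}{\binom{n}{k}} \le 2^{-d}
\]
for every $v \in \VV$. Summing over $v$ then gives $\sigma_k(G) \le \sum_v 2^{-\deg(v)} = 2\sigma(G)$, which is the claim. Any vertex with $d \ge n - k$ makes the left side vanish, so it suffices to treat $d \le n - k - 1$.

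To prove the pointwise bound, I expand the ratio as a product of $d+1$ factors by writing out factorials:
\[
\frac{\binom{n-d-1}{k}}{\binom{n}{k}} = \prod_{j=0}^{d} \frac{n-k-j}{n-j}.
\]
A direct check shows $(n-k-j)/(n-j) \le 1/2$ if and only if $j \ge n - 2k$. With $k = \lfloor n/2 \rfloor$, this threshold equals $0$ if $n$ is even and $1$ if $n$ is odd. In the even case every factor is at most $1/2$, so the product is at most $2^{-(d+1)} \le 2^{-d}$. In the odd case, the factors with $j \ge 1$ are at most $1/2$, while the $j = 0$ factor is $(n+1)/(2n) < 1$; bounding it trivially by $1$ and the remaining $d$ factors by $1/2$ yields a product of at most $2^{-d}$.

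The main obstacle is the odd-$n$ case, where the $j = 0$ factor strictly exceeds $1/2$. This is exactly why the statement carries the factor of $2$ rather than asserting $\sigma_k(G) \le \sigma(G)$: the multiplicative slack of $(n+1)/n$ from that first factor accumulates at most once per vertex, and the factor of $2$ on the right-hand side absorbs it with room to spare. A small bookkeeping remark is that when $d = 0$ (isolated vertex) in the odd case, the product reduces to the single factor $(n+1)/(2n) \le 1 = 2^{-0}$, so the pointwise bound still holds.
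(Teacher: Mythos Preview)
Your proof is correct and takes essentially the same approach as the paper: both establish the pointwise bound $\binom{n-m}{k}/\binom{n}{k}\le 2^{1-m}$ (in your notation $m=d+1$) by writing the ratio as a product of $m$ factors, bounding each by $1/2$ in the even case, and in the odd case bounding the $j=0$ factor by $1$ and the remaining factors by $1/2$. The only cosmetic difference is parameterization; your handling of the $d=0$ edge case and of vertices with $d\ge n-k$ matches the paper's reasoning exactly.
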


\begin{proof}
Fix $m$ with $1\le m\le n$. We claim
\begin{equation}\label{eq:binomRatioBound}
\frac{\binom{n-m}{k}}{\binom{n}{k}}\le 2^{1-m}.
\end{equation}
Assume $\binom{n-m}{k}\neq 0$ (otherwise the desired inequality is immediate). Using factorials and canceling yields
\[
\frac{\binom{n-m}{k}}{\binom{n}{k}}
=\frac{(n-m)!(n-k)!}{n!(n-m-k)!}
=\prod_{i=0}^{m-1}\frac{n-k-i}{\,n-i\,}.
\]
If $n$ is even, then $n=2t$ and $k=t$, so each factor equals $(t-i)/(2t-i)\le \tfrac12$, hence the product is at most $2^{-m}\le 2^{1-m}$. If $n$ is odd, then $n=2t+1$ and $k=t$, so the factors are $(t+1-i)/(2t+1-i)$; for $i\ge 1$ we have $(t+1-i)/(2t+1-i)\le \tfrac12$, while the remaining factor $(t+1)/(2t+1)\le 1$, so for $m\ge 2$ the product is at most $1\cdot 2^{-(m-1)}=2^{1-m}$, and for $m=1$ we have $(t+1)/(2t+1)\le 1=2^{0}$. In all cases, $\binom{n-m}{k}/\binom{n}{k}\le 2^{1-m}$ as claimed. Now applying \eqref{eq:binomRatioBound} with $m=|N[v]|=\deg(v)+1$ gives $\binom{n-|N[v]|}{k}/\binom{n}{k} \le 2^{-\deg(v)}$. Summing over $v$ yields $\sigma_k(G)\le \sum_{v \in \VV} 2^{-\deg(v)}=2\sum_{v \in \VV} 2^{-\deg(v)-1}=2\sigma(G)$.
\end{proof}

Combining Proposition~\ref{prop:unionBound} with the Beaton-Brown criterion yields unimodality whenever $\sigma_k(G)$ is sufficiently small at $k\approx n/2$. The next proposition is a convenient degree sequence sufficient condition.

\begin{proposition}\label{prop:sigmaCriterion}
Let $G$ be a graph on $n\ge 6$ vertices and set $k=\lceil n/2\rceil$. If $\sigma(G)\le 1/(n+2)$, then $D(G,x)$ is unimodal with mode $k$.
\end{proposition}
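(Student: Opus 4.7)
The plan is to feed the union bound of Proposition~\ref{prop:unionBound} into the Beaton-Brown tail criterion. By Proposition~\ref{rem:BB}(ii), setting $k=\lceil n/2\rceil$, it suffices to verify $r_k(G)\ge (n-k)/(k+1)$; this alone yields unimodality with mode $k$.

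First I would apply Proposition~\ref{prop:unionBound} to get $r_k(G)\ge 1-\sigma_k(G)$. Next I would use Proposition~\ref{prop:sigmaCompare} to replace $\sigma_k(G)$ by $2\sigma(G)$, and then the hypothesis $\sigma(G)\le 1/(n+2)$ to produce
\[
r_k(G)\ge 1-2\sigma(G)\ge 1-\frac{2}{n+2}=\frac{n}{n+2}.
\]
The mild nuisance here is that Proposition~\ref{prop:sigmaCompare} is stated at $k=\lfloor n/2\rfloor$, whereas the criterion needs $k=\lceil n/2\rceil$. For even $n$ these agree; for odd $n=2t+1$ and $k=t+1$, the telescoping in the proof of Proposition~\ref{prop:sigmaCompare} becomes $\binom{n-m}{k}/\binom{n}{k}=\prod_{i=0}^{m-1}(t-i)/(2t+1-i)$, and every factor is at most $1/2$, so the same argument delivers $\sigma_k(G)\le 2\sigma(G)$ (in fact with strict slack).

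Finally I would check the threshold $n/(n+2)\ge (n-k)/(k+1)$ at $k=\lceil n/2\rceil$ by separating parities: for even $n=2t$ one has $(n-k)/(k+1)=t/(t+1)=n/(n+2)$, meeting the bound with equality, and for odd $n=2t+1$ one has $(n-k)/(k+1)=t/(t+2)$, which is strictly less than $n/(n+2)=(2t+1)/(2t+3)$. In either parity the tail criterion fires and Proposition~\ref{rem:BB}(ii) delivers unimodality with mode $k$.

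No genuinely hard step is involved; the only real friction is the parity bookkeeping around Proposition~\ref{prop:sigmaCompare}. The hypothesis $\sigma(G)\le 1/(n+2)$ is calibrated precisely so that the resulting bound $n/(n+2)$ meets the Beaton-Brown threshold tightly in the even-$n$ case, which is the worst case and explains why no slack constant appears. The assumption $n\ge 6$ enters only because the condition $\sigma(G)\le 1/(n+2)$ is vacuous for smaller $n$ (already for $K_n$ with $n\le 5$ one computes $\sigma(K_n)=n/2^{n}>1/(n+2)$), so the proposition has no content below this range.
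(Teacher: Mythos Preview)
Your proof is correct and follows essentially the same route as the paper: union bound, then $\sigma_k\le 2\sigma$, then compare $n/(n+2)$ to the Beaton--Brown threshold. The only cosmetic difference is that the paper applies Proposition~\ref{prop:sigmaCompare} at $k_0=\lfloor n/2\rfloor$ and then invokes the monotonicity of $r_k$ (Proposition~\ref{rem:BB}(i)) to pass to $k=\lceil n/2\rceil$, whereas you rerun the binomial-ratio estimate directly at $\lceil n/2\rceil$; both handle the parity bookkeeping cleanly.
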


\begin{proof}
Let $k_0=\lfloor n/2\rfloor$. By Proposition~\ref{prop:unionBound} and Proposition~\ref{prop:sigmaCompare},
\[
1-r_{k_0}(G)\le \sigma_{k_0}(G)\le 2\sigma(G)\le \frac{2}{n+2}.
\]
By monotonicity of $r_k(G)$ (Proposition~\ref{rem:BB}(i)), $r_k(G)\ge r_{k_0}(G)\ge 1-\frac{2}{n+2}=\frac{n}{n+2}$. A direct check shows that for $n\ge 6$ and $k=\lceil n/2\rceil$ one has $\frac{n}{n+2}\ge \frac{n-k}{k+1}$. Therefore the Beaton-Brown tail criterion applies and yields unimodality with mode $k$.
\end{proof}

One advantage of Proposition~\ref{prop:sigmaCriterion} is that it is sensitive to the entire degree sequence and is stable under adding universal vertices (that is, vertices $v$ for which $N[v]=\VV$).

\begin{corollary}\label{cor:universalVertices}
Let $G$ be a graph on $n\ge 6$ vertices with $m$ universal vertices, and suppose every other vertex has degree at least $\delta$. If
\[
(n-m)\cdot 2^{-\delta-1}\le \frac{1}{n+2},
\]
then $D(G,x)$ is unimodal with mode $\lceil n/2\rceil$.
\end{corollary}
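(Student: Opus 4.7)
The plan is to apply Proposition~\ref{prop:unionBound} combined with the Beaton-Brown tail criterion, after noting that universal vertices contribute zero to $\sigma_k(G)$ for $k\ge 1$. Concretely, if $v$ is universal then $|N[v]|=n$, so $\binom{n-|N[v]|}{k}=\binom{0}{k}=0$ whenever $k\ge 1$; since $n\ge 6$ implies $k_0:=\lfloor n/2\rfloor\ge 3$, the sum defining $\sigma_{k_0}(G)$ reduces to a sum over the $n-m$ non-universal vertices. This vanishing is the structural reason the hypothesis in the corollary only needs to control $n-m$, not $n$.

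Next, for each non-universal $v$ I would invoke the binomial-ratio inequality \eqref{eq:binomRatioBound} from the proof of Proposition~\ref{prop:sigmaCompare} (applied with the closed neighborhood size $|N[v]|=\deg(v)+1\ge\delta+1$) to obtain $\binom{n-|N[v]|}{k_0}/\binom{n}{k_0}\le 2^{-\deg(v)}\le 2^{-\delta}$. Summing over the $n-m$ non-universal vertices and invoking the hypothesis of the corollary yields
\[
\sigma_{k_0}(G)\le (n-m)\cdot 2^{-\delta}=2(n-m)\cdot 2^{-\delta-1}\le \frac{2}{n+2}.
\]
Proposition~\ref{prop:unionBound} then gives $r_{k_0}(G)\ge 1-\sigma_{k_0}(G)\ge n/(n+2)$.

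Finally, Proposition~\ref{rem:BB}(i) propagates this lower bound to $k=\lceil n/2\rceil$, and the arithmetic inequality $\frac{n}{n+2}\ge \frac{n-k}{k+1}$ for $n\ge 6$ and $k=\lceil n/2\rceil$ (already verified inside the proof of Proposition~\ref{prop:sigmaCriterion}) allows the Beaton-Brown tail criterion in Proposition~\ref{rem:BB}(ii) to deliver unimodality with mode $k$. There is no genuine obstacle here: the entire content of the corollary is the observation that universal vertices drop out of $\sigma_{k_0}$, after which every subsequent inequality is a direct quotation of a bound already established in this section.
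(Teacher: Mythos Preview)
Your proof is correct and follows essentially the same route as the paper: bound the non-domination probability via the binomial-ratio estimate \eqref{eq:binomRatioBound} and then invoke the Beaton--Brown tail criterion. The only difference is cosmetic: the paper bounds the degree-sequence proxy $\sigma(G)$ and cites Proposition~\ref{prop:sigmaCriterion} as a black box, whereas you work directly with $\sigma_{k_0}(G)$ (where universal vertices contribute exactly zero rather than the small $m\cdot 2^{-n}$ term the paper's $\sigma$-based argument must absorb) and reproduce the final steps of that proposition inline.
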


\begin{proof}
Each universal vertex contributes $2^{-(n-1)-1}=2^{-n}$ to $\sigma(G)$, while each non-universal vertex contributes at most $2^{-\delta-1}$. Thus
\[
\sigma(G)\le m\cdot 2^{-n}+(n-m)\cdot 2^{-\delta-1}\le (n-m)\cdot 2^{-\delta-1}+\frac{m}{2^n}.
\]
The hypothesis implies $\sigma(G)\le 1/(n+2)$, so Proposition~\ref{prop:sigmaCriterion} yields the claim.
\end{proof}

As a baseline comparison, we refine the result of Beaton and Brown proving unimodality when $\delta(G)\ge 2\log_2 n$ \cite{BeatonBrown2022}.

\begin{proposition}\label{prop:mindegLog}
Let $G$ be a graph on $n\ge 4$ vertices. If
\[
\delta(G)\ \ge\ \log_2(n(n+2)) - 2,
\]
then $D(G,x)$ is unimodal with mode $\lceil n/2\rceil$.
\end{proposition}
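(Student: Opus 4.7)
The plan is to combine Proposition~\ref{prop:unionBound}, a parity-sensitive refinement of Proposition~\ref{prop:sigmaCompare}, and the Beaton-Brown tail criterion. Rearranging the hypothesis $\delta(G) \ge \log_2(n(n+2)) - 2$ gives $n \cdot 2^{-\delta-1} \le 2/(n+2)$; since each vertex contributes at most $2^{-\delta-1}$ to $\sigma(G)$, this yields the baseline estimate $\sigma(G) \le 2/(n+2)$. This is a factor of $2$ weaker than the hypothesis of Proposition~\ref{prop:sigmaCriterion}, so I cannot invoke that criterion directly; instead I will sharpen the comparison $\sigma_{\lfloor n/2 \rfloor}(G) \le 2\sigma(G)$ by exploiting parity, then apply the tail criterion at $k = \lceil n/2 \rceil$.

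For even $n$, set $k = n/2$. Revisiting the telescoping product in the proof of Proposition~\ref{prop:sigmaCompare}, the leading factor $(n-k)/n = 1/2$ is already exactly $2^{-1}$, so $\binom{n-m}{n/2}/\binom{n}{n/2} \le 2^{-m}$ for all $m \ge 1$. Summing over $v$ gives $\sigma_{n/2}(G) \le \sigma(G) \le 2/(n+2)$, hence $r_{n/2}(G) \ge n/(n+2)$, which matches $(n-k)/(k+1)$ at $k = n/2$ with equality. The tail criterion then applies.

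For odd $n$, set $k = \lceil n/2 \rceil = (n+1)/2$ and $k_0 = \lfloor n/2 \rfloor = (n-1)/2$. The tail criterion requires $r_k(G) \ge (n-1)/(n+3)$, and by monotonicity of $r_k(G)$ it suffices to show the same bound at $k_0$. Here the leading factor of the telescoping product is $(n+1)/(2n)$, slightly larger than $1/2$; isolating it and bounding each of the remaining $m-1$ factors by $1/2$ produces $\sigma_{k_0}(G) \le \frac{n+1}{n}\sigma(G) \le \frac{2(n+1)}{n(n+2)}$. The elementary inequality $\frac{2(n+1)}{n(n+2)} \le \frac{4}{n+3}$ reduces to $n^2 \ge 3$, which holds for all $n \ge 2$, closing the case.

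The main obstacle is precisely the factor-of-$2$ slack in $\sigma_k(G) \le 2\sigma(G)$: the hypothesis on $\delta$ is calibrated sharply enough that this slack must be eliminated, which forces the parity split. The refined bounds above are what make the argument work; once they are in hand, the remainder is a direct application of the monotonicity of $r_k(G)$ and the Beaton-Brown tail criterion.
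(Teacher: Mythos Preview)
Your proof is correct and follows essentially the same strategy as the paper: union bound (Proposition~\ref{prop:unionBound}), a telescoping estimate on $\binom{n-m}{k}/\binom{n}{k}$, and the Beaton--Brown tail criterion. The one difference is that your parity split is unnecessary: the paper works directly at $k=\lceil n/2\rceil$ throughout, where $n-k=\lfloor n/2\rfloor$ makes \emph{every} factor $(n-k-i)/(n-i)\le 1/2$, so $\binom{n-m}{k}/\binom{n}{k}\le 2^{-m}$ uniformly in parity, and hence $1-r_k(G)\le n\cdot 2^{-\delta-1}\le 2/(n+2)$ with no case analysis and no appeal to monotonicity of $r_k$.
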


\begin{proof}
Let $k=\lceil n/2\rceil$. By Proposition~\ref{prop:unionBound}, $1-r_k(G)\le \sigma_k(G)\le n\cdot \frac{\binom{n-\delta(G)-1}{k}}{\binom{n}{k}}.$ Let $m=\delta(G)+1$. Using $\binom{n-m}{k}/\binom{n}{k}=\binom{n-k}{m}/\binom{n}{m}$ and bounding as in the proof of Proposition~\ref{prop:sigmaCompare} yields $\frac{\binom{n-m}{k}}{\binom{n}{k}}\le 2^{-m}.$ Hence $1-r_k(G)\le n\cdot 2^{-\delta(G)-1}$. Under the hypothesis $\delta(G)\ge \log_2(n(n+2))-2$, this gives $1-r_k(G)\le \frac{2}{n+2}$ and so $r_k(G)\ge \frac{n}{n+2}\ge \frac{n-k}{k+1}$. The Beaton-Brown tail criterion yields unimodality with mode $k$.
\end{proof}
The union bound in Proposition~\ref{prop:unionBound} ignores overlaps among the events $A_v$. We now introduce an overlap correction based on spanning trees, which is particularly effective when many vertices have similar closed neighborhoods. For $0\le k\le n$, define
\begin{equation}\label{eq:tauDef}
\tau_k(G):=\max_{T}\ \sum_{uv\in E(T)}\frac{\binom{n-|N[u]\cup N[v]|}{k}}{\binom{n}{k}},
\end{equation}
where the maximum ranges over all spanning trees $T$ on the vertex set $\VV$. The edges $uv$ of $T$ are not required to be edges of $G$; the parameter is measuring overlap among the families $A_v$, and for this purpose only the vertex set matters. It is helpful to view $\tau_k(G)$ as a combinatorial optimization problem.
Define weights on unordered pairs $\{u,v\}$ by $w_k(u,v):=\frac{\binom{n-|N[u]\cup N[v]|}{k}}{\binom{n}{k}}=\frac{|A_u\cap A_v|}{\binom{n}{k}}.$ Then $\tau_k(G)$ is the maximum total weight of a spanning tree in the complete graph on $\VV$ with these weights.
In particular, $\tau_k(G)$ can be computed in polynomial time by a maximum spanning tree algorithm.

The extreme case that motivates the definition is when two vertices have identical closed neighborhoods. If $N[u]=N[v]$ then $A_u=A_v$ and therefore $|A_u\cap A_v|=|A_u|$. If a graph contains a large class of vertices with identical closed neighborhood, the union bound counts the same event many times, while the spanning tree subtraction in Proposition~\ref{prop:overlapBound} can cancel all but one copy, making the bound essentially sharp on that twin class.

\begin{proposition}\label{prop:overlapBound}
For every graph $G$ on $n$ vertices and every $k$, $r_k(G)\ge1-\sigma_k(G)+\tau_k(G)$.
\end{proposition}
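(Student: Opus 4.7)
The plan is to upgrade the union bound underlying Proposition~\ref{prop:unionBound} via the Hunter-Worsley spanning tree inequality, then optimize the resulting tree bound over all spanning trees on $\VV$. As in the proof of Proposition~\ref{prop:unionBound}, the non-dominating $k$-element subsets of $\VV$ are precisely the members of $\bigcup_{v\in\VV}A_v$, and what is needed is a sharper upper bound on $|\bigcup_v A_v|$ than the naive $\sum_v|A_v|$.

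First I would establish the following set-theoretic spanning tree inequality: for any finite collection $\{A_v\}_{v\in\VV}$ and any spanning tree $T$ on the vertex set $\VV$,
\[
\left|\bigcup_{v\in\VV}A_v\right|\le \sum_{v\in\VV}|A_v|-\sum_{uv\in E(T)}|A_u\cap A_v|.
\]
The argument is the standard Hunter-Worsley proof: root $T$ at any vertex and order $\VV=\{v_1,\dots,v_n\}$ so that for $i\ge 2$ each $v_i$ has a neighbor $p(v_i)\in\{v_1,\dots,v_{i-1}\}$ in $T$; such an order exists because $T$ is connected. Express the union as the disjoint union $\bigsqcup_{i}(A_{v_i}\setminus\bigcup_{j<i}A_{v_j})$ and use the inclusion $A_{v_i}\setminus\bigcup_{j<i}A_{v_j}\subseteq A_{v_i}\setminus A_{p(v_i)}$ together with $|A_{v_i}\setminus A_{p(v_i)}|=|A_{v_i}|-|A_{v_i}\cap A_{p(v_i)}|$. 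Summing and noting that each tree edge appears exactly once as a parent-child pair yields the inequality.

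With this tool in hand, I would apply it to the events $A_v$ defined just before Proposition~\ref{prop:unionBound}. A set lies in $A_u\cap A_v$ if and only if it is a $k$-element subset of $\VV\setminus(N[u]\cup N[v])$, so $|A_u\cap A_v|=\binom{n-|N[u]\cup N[v]|}{k}$. Substituting into the spanning tree inequality and dividing through by $\binom{n}{k}$ produces
\[
1-r_k(G)\le \sigma_k(G)-\sum_{uv\in E(T)}\frac{\binom{n-|N[u]\cup N[v]|}{k}}{\binom{n}{k}}
\]
for every spanning tree $T$ on $\VV$. Choosing $T$ to maximize the subtracted sum replaces that sum by $\tau_k(G)$, which upon rearrangement gives the claim.

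There is no real obstacle here; the content is simply the recognition that $\tau_k(G)$ has been defined precisely to be the optimum of the Hunter-Worsley correction. The only point worth noting for correctness is that the spanning tree in \eqref{eq:tauDef} ranges over the complete graph on $\VV$ rather than over edges of $G$, which is exactly the freedom the Hunter-Worsley inequality requires and permits.
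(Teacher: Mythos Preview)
Your proposal is correct and follows essentially the same approach as the paper: both prove the spanning tree inequality $|\bigcup_v A_v|\le\sum_v|A_v|-\sum_{uv\in E(T)}|A_u\cap A_v|$, compute $|A_u\cap A_v|=\binom{n-|N[u]\cup N[v]|}{k}$, and optimize over spanning trees on $\VV$. The only cosmetic difference is that the paper proves the tree inequality by peeling leaves one at a time, whereas you root the tree and use the disjoint decomposition $\bigcup_v A_v=\bigsqcup_i(A_{v_i}\setminus\bigcup_{j<i}A_{v_j})$; these are two standard presentations of the same (Hunter--Worsley) argument.
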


\begin{proof}
We use the notation $A_v$ from Proposition~\ref{prop:unionBound}. Fix any spanning tree $T$ on $\VV$ and choose a leaf $\ell$ of $T$ with unique neighbor $w$ in $T$. Let $U=\bigcup_{v\in \VV\setminus\{\ell\}} A_v$. Then $\left|\bigcup_{v\in\VV}A_v\right|=|U|+|A_\ell|-|U\cap A_\ell|.$ Since $A_w\subseteq U$, we have $U\cap A_\ell\supseteq A_w\cap A_\ell$, and therefore $\left|\bigcup_{v\in\VV}A_v\right|\le |U|+|A_\ell|-|A_w\cap A_\ell|.$ Applying the same argument to $U$ along the spanning tree obtained from $T$ by deleting $\ell$, and iterating until all vertices are deleted, yields $\left|\bigcup_{v\in\VV}A_v\right| \le \sum_{v\in\VV}|A_v|-\sum_{uv\in E(T)}|A_u\cap A_v|.$ This inequality holds for every spanning tree $T$, hence
\begin{equation}\label{eq:treeinequality}
\left|\bigcup_{v\in\VV}A_v\right|
\le \sum_{v\in\VV}|A_v|-\max_T\sum_{uv\in E(T)}|A_u\cap A_v|.
\end{equation}
Now, as in Proposition~\ref{prop:unionBound}, $d_k(G)\ge \binom{n}{k}-\left|\bigcup_{v\in\VV}A_v\right|.$ For each $v$ we have $|A_v|=\binom{n-|N[v]|}{k}$, and for each pair $u,v$ we have $|A_u\cap A_v|=\binom{n-|N[u]\cup N[v]|}{k}$. Substituting into the inequality in \eqref{eq:treeinequality} and dividing by $\binom{n}{k}$ gives $r_k(G)\ge 1-\sigma_k(G)+\tau_k(G)$.
\end{proof}

Combining Proposition~\ref{prop:overlapBound} with the Beaton-Brown tail criterion gives an overlap-corrected sufficient condition for unimodality at $k=\lceil n/2\rceil$.

\begin{theorem}\label{thm:sigmaTauUnimodality}
Let $G$ be a graph on $n$ vertices and let $k=\lceil n/2\rceil$. If
\begin{equation}\label{eq:sigmaTauCriterion}
\sigma_k(G)-\tau_k(G)\ \le\ \frac{2k+1-n}{k+1},
\end{equation}
then $D(G,x)$ is unimodal with mode $k$.
\end{theorem}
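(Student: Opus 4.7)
The plan is to derive the theorem as an immediate consequence of Proposition~\ref{prop:overlapBound} combined with the Beaton-Brown tail criterion from Proposition~\ref{rem:BB}(ii). The right-hand side $\frac{2k+1-n}{k+1}$ appearing in the hypothesis \eqref{eq:sigmaTauCriterion} is calibrated precisely to match the Beaton-Brown threshold $\frac{n-k}{k+1}$; the substantive work has already been carried out in establishing the overlap-corrected lower bound in Proposition~\ref{prop:overlapBound}.

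First, I would apply Proposition~\ref{prop:overlapBound} at the given value $k=\lceil n/2\rceil$ to obtain
\[
r_k(G)\ \ge\ 1-\sigma_k(G)+\tau_k(G).
\]
Combining this with the hypothesis $\sigma_k(G)-\tau_k(G)\le \frac{2k+1-n}{k+1}$ and performing the routine simplification $(k+1)-(2k+1-n)=n-k$, the right-hand side is at least $\frac{n-k}{k+1}$. Hence $r_k(G)\ge \frac{n-k}{k+1}$.

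Second, since $k=\lceil n/2\rceil\ge n/2$, the Beaton-Brown tail criterion in Proposition~\ref{rem:BB}(ii) applies directly at $k=\lceil n/2\rceil$ and delivers unimodality of $D(G,x)$ with mode $k$. No further argument is required.

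There is no genuine obstacle here: the theorem is essentially the statement that the lower bound in Proposition~\ref{prop:overlapBound} meets the Beaton-Brown threshold exactly when $\sigma_k(G)-\tau_k(G)$ falls below the calibrated bound. The substantive content sits in Proposition~\ref{prop:overlapBound}, where the spanning-tree overlap correction $\tau_k(G)$ was constructed; the present theorem packages that estimate as a ready-to-apply sufficient criterion for unimodality, in the same spirit as Proposition~\ref{prop:sigmaCriterion} but with the overlap correction now made explicit.
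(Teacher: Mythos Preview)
Your proposal is correct and follows essentially the same approach as the paper's proof: apply Proposition~\ref{prop:overlapBound} to get $r_k(G)\ge 1-\sigma_k(G)+\tau_k(G)$, rearrange the hypothesis to obtain $r_k(G)\ge (n-k)/(k+1)$, and invoke Proposition~\ref{rem:BB}(ii) at $k=\lceil n/2\rceil$ to conclude unimodality with mode $k$.
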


\begin{proof}
By Proposition~\ref{prop:overlapBound}, $r_k(G)\ge 1-\sigma_k(G)+\tau_k(G)$. Rearranging \eqref{eq:sigmaTauCriterion} gives $1-\sigma_k(G)+\tau_k(G)\ \ge\ \frac{n-k}{k+1},$ so $r_k(G)\ge (n-k)/(k+1)$. The Beaton-Brown tail criterion implies $d_k(G)\ge d_{k+1}(G)\ge\cdots\ge d_n(G)$, and together with the monotonicity of $r_k(G)$ this yields unimodality with mode $k$.
\end{proof}
We now give a concrete family of graphs were our phenomena apply. The key property is that any graph in the family contains a large twin class inside a clique, so the spanning tree subtraction encoded by $\tau_k(G)$ cancels essentially the entire union bound contribution of that class (as discussed following \eqref{eq:tauDef}), leaving a single binomial ratio that can be controlled using \eqref{eq:binomRatioBound}.

In that light, fix integers $s\ge 2$ and $N\ge 1$. Let $C$ and $Q$ be disjoint cliques on $N$ and $s$ vertices, respectively. Choose distinguished vertices $c_0\in C$ and $q_0\in Q$, and form a graph $G=G_{N,s}$ by adding the edges $c_0q$ for all $q\in Q\setminus\{q_0\}$ and write $n:=|V(G_{N,s})|=N+s$ (see Figure~\ref{fig:GNs-blob} for an illustration).


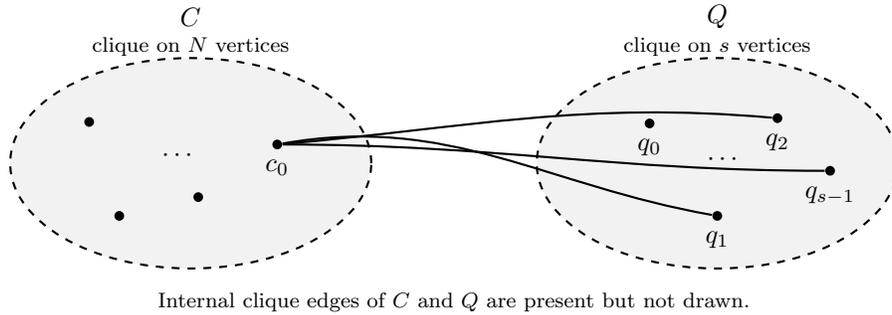
\begin{figure}[ht]
\centering
\begin{tikzpicture}[
  every node/.style={font=\small},
  vertex/.style={circle,fill=black,inner sep=1.3pt},
  special/.style={circle,draw,very thick,fill=white,minimum size=6mm,inner sep=0pt},
  edge/.style={thick}
]

\draw[dashed,thick,fill=gray!10] (0,0) ellipse (2.4 and 1.4);
\node at (0,1.95) {$C$};
\node[font=\scriptsize] at (0,1.55) {clique on $N$ vertices};

\draw[dashed,thick,fill=gray!10] (7,0) ellipse (2.4 and 1.4);
\node at (7,1.95) {$Q$};
\node[font=\scriptsize] at (7,1.55) {clique on $s$ vertices};

\node[vertex, label=below:$c_0$] (c0) at (1.15,0.25) {};
\node[vertex] (c1) at (-1.35,0.55) {};
\node[vertex] (c2) at (-0.95,-0.70) {};
\node[vertex] (c3) at (0.10,-0.45) {};
\node at (-0.15,0.10) {$\cdots$};

\node[vertex, label=below:$q_0$] (q0) at (6.10,0.53) {};
\node[vertex, label=below:$q_1$] (q1) at (7.00,-0.70) {};
\node[vertex, label=below:$q_2$] (q2) at (7.80,0.60) {};
\node[vertex, label=below:$q_{s-1}$] (qs) at (8.50,-0.10) {};
\node at (7.10,0.05) {$\cdots$};

\draw[edge] (c0) to[out=10,in=170] (q1);
\draw[edge] (c0) to[out=5,in=175]  (q2);
\draw[edge] (c0) to[out=0,in=180]  (qs);

\node[font=\scriptsize] at (3.5,-1.85)
{Internal clique edges of $C$ and $Q$ are present but not drawn.};

\end{tikzpicture}
\caption{$G_{N,s}$: two cliques $C$ and $Q$ with distinguished vertices $c_0\in C$ and $q_0\in Q$,
and edges $c_0q$ for every $q\in Q\setminus\{q_0\}$.}
\label{fig:GNs-blob}
\end{figure}


Begin by evaluating $\sigma_k(G)$ from definition \eqref{eq:sigmaDef}. For any $c\in C\setminus\{c_0\}$ we have $N[c]=C$, hence $n-|N[c]|=s<k$ and so $\binom{n-|N[c]|}{k}=0$ by our binomial convention. The vertex $c_0$ satisfies $N[c_0]=\VV\setminus\{q_0\}$, giving $n-|N[c_0]|=1<k$ and therefore contributing $0$ as well. On the $Q$ side, the vertex $q_0$ has $N[q_0]=Q$, so $n-|N[q_0]|=N$, while every $q\in Q\setminus\{q_0\}$ has $N[q]=Q\cup\{c_0\}$, so $n-|N[q]|=N-1$. Summing the resulting terms in \eqref{eq:sigmaDef} yields $\sigma_k(G)=\left(\binom{N}{k}+(s-1)\binom{N-1}{k}\right)/\binom{n}{k}$.

Now for $\tau_k(G)$, use the weight interpretation following \eqref{eq:tauDef}. If $u\in C\setminus\{c_0\}$ then $N[u]=C$, so $N[u]\cup N[v]$ contains $C$ for every $v\in\VV$, and therefore the complement $\VV\setminus(N[u]\cup N[v])$ is contained in $Q$ and has size at most $s<k$, forcing $w_k(u,v)=\binom{n-|N[u]\cup N[v]|}{k}/\binom{n}{k}=0$ for every $v$. Likewise if $u=c_0$ then $N[c_0]=\VV\setminus\{q_0\}$ implies $n-|N[c_0]\cup N[v]|\le 1<k$ and again $w_k(c_0,v)=0$ for all $v$. Hence the only potentially nonzero weights occur between pairs of vertices in $Q$. For any distinct $q,q'\in Q\setminus\{q_0\}$ we have $N[q]=N[q']=Q\cup\{c_0\}$, so $n-|N[q]\cup N[q']|=N-1$; also $N[q_0]\subseteq N[q]$ for $q\in Q\setminus\{q_0\}$, so $n-|N[q_0]\cup N[q]|=N-1$ as well. Thus every edge in the complete graph on vertex set $Q$ has weight $\binom{N-1}{k}/\binom{n}{k}$, while every edge incident to a vertex in $C$ in the complete graph on $\VV$ has weight $0$. A spanning tree on $\VV$ can contain at most $s-1$ edges with both ends in $Q$, and taking any spanning tree on $Q$ together with $N$ additional edges connecting the vertices of $C$ to $Q$ achieves $s-1$ such edges, so the maximum total weight is $(s-1)\binom{N-1}{k}/\binom{n}{k}$, and hence this is $\tau_k(G)$. It follows that $\sigma_k(G)-\tau_k(G)=\frac{\binom{N}{k}}{\binom{n}{k}}.$

Now for each integer $\ell\ge 4$, we claim the graph $G_{N,\ell}$ with $N=2^\ell-\ell-2$ has unimodal domination polynomial with mode $k=n/2$ where $n=2^{\ell}-2$. Since $\ell<k$ for $\ell\ge 4$, we have $\sigma_k(G_{N,\ell})-\tau_k(G_{N,\ell})=\binom{n-\ell}{k}/\binom{n}{k}$. Because $n$ is even, we have $k=\lfloor n/2\rfloor$, so \eqref{eq:binomRatioBound} applied with $m=\ell$ yields $\sigma_k(G_{N,\ell})-\tau_k(G_{N,\ell})\le 2^{1-\ell}$. On the other hand, since $n=2k$ the right-hand side of \eqref{eq:sigmaTauCriterion} is $(2k+1-n)/(k+1)=1/(k+1)$, and with $n=2^\ell-2$ we have $k+1=2^{\ell-1}$, hence $1/(k+1)=2^{1-\ell}$. Therefore \eqref{eq:sigmaTauCriterion} holds, and Theorem~\ref{thm:sigmaTauUnimodality} then implies $D(G_{N,\ell},x)$ is unimodal with mode $k$.

\section{Perspective 3: Total Positivity}
Our last perspective shows that inheriting theory from enumerative combinatorics is a fruitful endeavor for uncovering the unimodality of domination polynomials. We start by showing a broad class of polynomials have log-concave, and hence unimodal, domination sequences. This is done by appealing to the theory of total positivity, particularly appealing to a classical result of Lindstr\"{o}m-Gessel-Viennot on positivity and lattice paths in the plane. We then see how a ubiquitous class of graphs, threshold graphs, have domination polynomials in this broad class of polynomials, and therefore have unimodal domination polynomials.

\begin{lemma}\label{lem:unimodal_staircase_binomial_sum}
Let $\alpha_1 \ge \alpha_2 \ge \cdots \ge \alpha_m \ge 1$ and $0 \le \beta_1 < \beta_2 < \cdots < \beta_m$ be integers such that
\[
\alpha_{r+1}+\beta_{r+1}=\alpha_r+\beta_r+1\qquad\text{for all }r=1,\dots,m-1.
\]
Define $P(x) := \sum_{r=1}^m x^{\alpha_r}(1+x)^{\beta_r}.$ Then the coefficient sequence of $P(x)$ is unimodal.
\end{lemma}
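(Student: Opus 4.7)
\emph{Plan.} I aim to prove the stronger statement that the coefficient sequence $(c_k)_{k \ge 0}$ of $P(x)$, with $c_k = \sum_{r=1}^{m} \binom{\beta_r}{k - \alpha_r}$, is log-concave; since a nonnegative log-concave sequence has contiguous support and is unimodal on it, this will imply the lemma. Each summand $\binom{\beta_r}{k - \alpha_r}$ counts N/E lattice paths of length $\beta_r$ starting at $(\alpha_r, 0)$ and ending on the column $x = k$ at height $D_r - k$, where $D_r := \alpha_r + \beta_r$. The staircase hypothesis $D_{r+1} = D_r + 1$ is exactly what allows the endpoints, for fixed $k$ and varying $r$, to line up on $m$ consecutive integer heights of the line $x = k$, and this is the combinatorial hinge of the argument.

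\emph{Planar network.} Next I would realize $c_k$ as a single-source path count in a planar DAG. Working in the standard NE grid on $\mathbb{Z}^2$, adjoin a super-source $A$ far to the west, with one directed edge $A \to (\alpha_r, -(r-1))$ for each $r$. The vertical offset $-(r-1)$ is tuned so that, by the staircase identity, the number of NE paths from $(\alpha_r, -(r-1))$ to the common sink $B_k := (k, D_1 - k)$ equals $\binom{\beta_r}{k - \alpha_r}$; summing over $r$ gives $\#\{A \to B_k\} = c_k$. All sinks $B_k$ lie on the anti-diagonal $x + y = D_1$, and the shift $v := (1, -1)$ sends $B_k$ to $B_{k+1}$.

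\emph{LGV.} I would then apply the Lindstr\"om--Gessel--Viennot lemma to two sources $A_0 := A$ and $A_1 := A + v$, each equipped with its own translated fan (with $A_0$ placed far enough west that the two fans are disjoint), together with two sinks $B_k, B_{k+1}$. Translation invariance of the NE grid yields $\#\{A_1 \to B_j\} = \#\{A_0 \to B_{j-1}\} = c_{j-1}$, so the path matrix equals $\begin{pmatrix} c_k & c_{k+1} \\ c_{k-1} & c_k \end{pmatrix}$. Because $A_0$ lies northwest of $A_1$ and $B_k$ lies northwest of $B_{k+1}$, the only source-sink matching admitting a non-intersecting pair of paths is the identity, so LGV identifies the determinant $c_k^2 - c_{k-1} c_{k+1}$ with the number of non-intersecting pairs, which is nonnegative.

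\emph{Main obstacle.} The delicate step is verifying that the planar configuration really forces the identity matching and that the super-source fans introduce no spurious crossings; coincident values $\alpha_r = \alpha_{r+1}$ place two fan targets at nearby points and require particular care. A small perturbation of the fan endpoints (preserving path counts via the forced east steps immediately after each fan edge), or a direct bookkeeping verification that the LGV sign-reversing involution is compatible with the tagging by $r$, handles this case without affecting the key inequality.
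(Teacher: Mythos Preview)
Your proposal is correct and follows essentially the same route as the paper's proof. Both arguments prove log-concavity of $(c_k)$ via a Lindstr\"om--Gessel--Viennot determinant in the NE lattice: your fan targets $(\alpha_r,-(r-1))$ and their $(1,-1)$-translates are, after a global vertical shift by $m$, exactly the paper's points $U_r=(\alpha_r,m-r+1)$ and $V_r=(\alpha_r+1,m-r)$, and your sinks $B_k=(k,D_1-k)$ are the paper's $B_k=(k,N-k+1)$ shifted by the same amount; the resulting $2\times2$ path matrix and the planarity/non-crossing justification are identical, and the paper likewise flags the planar embedding of the two fans as the point requiring care.
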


\begin{proof}
Set $n_r := \alpha_r+\beta_r$ for $1 \leq r \leq m$. The hypothesis implies $n_{r+1}=n_r+1$, and hence $n_r=n_1+(r-1)$, for all $r$. In particular $N:=n_m=n_1+(m-1).$ Since $\deg\bigl(x^{\alpha_r}(1+x)^{\beta_r}\bigr)=n_r$, it follows that $\deg P = N$ because the top degree term $x^{\alpha_m+\beta_m}$ from $x^{\alpha_m}(1+x)^{\beta_m}$ has no cancellation. Writing $P(x)=\sum_{k=0}^{N} c_k x^k,$ we expand each summand:
\[
x^{\alpha_r}(1+x)^{\beta_r}
= x^{\alpha_r}\sum_{j=0}^{\beta_r}\binom{\beta_r}{j}x^j
= \sum_{j=0}^{\beta_r}\binom{\beta_r}{j}x^{\alpha_r+j}.
\]
With the convention $\binom{n}{t}=0$ when $t\notin\{0,1,\dots,n\}$, we have for every $0\le k\le N$,
\begin{equation}\label{eq:ck-sum}
c_k := [x^k]P(x)= \sum_{r=1}^m \binom{\beta_r}{\,k-\alpha_r\,}.
\end{equation}

The sequence $(c_0,c_1,\dots,c_N)$ has no internal zeros. Indeed, each term $x^{\alpha_r}(1+x)^{\beta_r}$ has lowest degree term $x^{\alpha_r}$ which has degree at least $\alpha_m$, so $c_k=0$ for $k<\alpha_m$. On the other hand, the final summand $x^{\alpha_m}(1+x)^{\beta_m}$ has support on all degrees $\alpha_m,\alpha_m+1,\dots,\alpha_m+\beta_m$, and since $\alpha_m+\beta_m=n_m=N$, we obtain $c_k>0$ for all $\alpha_m\le k\le N$.

To prove unimodality, we show log-concavity:
\begin{equation}\label{eq:logconc}
c_k^2 \ge c_{k-1}c_{k+1}\qquad (0\le k\le N),
\end{equation}
interpreting $c_{-1}=c_{N+1}=0$. Log-concavity together with the absence of internal zeros implies unimodality (see \cite{Stanley1989}). We obtain \eqref{eq:logconc} by encoding the coefficients as path counts in a planar acyclic network and applying the Lindstr\"om--Gessel--Viennot lemma \cite{GesselViennot1985,Lindstrom1973} together with a planar noncrossing argument from total positivity (see \cite{Postnikov2006}).

Consider the directed grid graph on $\mathbb{Z}^2$ with edges $(i,j)\to(i+1,j)$ and $(i,j)\to(i,j+1)$ (steps east or north). For $0\le k\le N$, define sinks on the diagonal $x+y=N+1$ by
\[
B_k := (k,\,N-k+1).
\]
For $1\le r\le m$, define vertices
\[
U_r := (\alpha_r,\,m-r+1),
\qquad
V_r := (\alpha_r+1,\,m-r),
\]
and adjoin two additional sources $s$ and $t$, with directed edges $s\to U_r$ and $t\to V_r$ for each $r$. We embed this as a planar network in the evident way: the grid is planar, we place $s$ above $t$ to the left, and we draw the edges $s\to U_r$ and $t\to V_r$ as noncrossing arcs. Let $p(X,Y)$ denote the number of directed paths from $X$ to $Y$ in this network.

We compute the number of paths from $U_r$ and $V_r$ to $B_k$. A directed path from $U_r=(\alpha_r,m-r+1)$ to $B_k=(k,N-k+1)$ must take $E=k-\alpha_r$ east steps and
\[
N'=(N-k+1)-(m-r+1)=N-k-m+r
\]
north steps. Since $N=n_r+(m-r)$, we have $N'=n_r-k$. The total number of steps is then
\[
E+N'=(k-\alpha_r)+(n_r-k)=n_r-\alpha_r=\beta_r,
\]
and the number of such paths is $\binom{E+N'}{E}=\binom{\beta_r}{k-\alpha_r}$. Thus $p(U_r,B_k)=\binom{\beta_r}{k-\alpha_r}$. Similarly, a directed path from $V_r=(\alpha_r+1,m-r)$ to $B_k$ must take $E=k-\alpha_r-1$ east steps and
\[
N'=(N-k+1)-(m-r)=N-k-m+r+1=n_r-k+1
\]
north steps, so again the total number of steps is $\beta_r$ and $p(V_r,B_k)=\binom{\beta_r}{k-\alpha_r-1}$. Every path from $s$ to $B_k$ first chooses one edge $s\to U_r$ for some $r$ then follows a path from that $U_r$ to $B_k$, so
\[
p(s,B_k)=\sum_{r=1}^m p(U_r,B_k)=\sum_{r=1}^m \binom{\beta_r}{k-\alpha_r}=c_k
\]
by \eqref{eq:ck-sum}. Likewise,
\[
p(t,B_k)=\sum_{r=1}^m p(V_r,B_k)=\sum_{r=1}^m \binom{\beta_r}{k-\alpha_r-1}=c_{k-1},
\]
and similarly $p(s,B_{k+1})=c_{k+1}$ and $p(t,B_{k+1})=c_k$.

Now fix $k$ with $0\le k\le N-1$ and consider the $2\times 2$ determinant
\[
\Delta_k :=
\det\begin{pmatrix}
p(s,B_k) & p(s,B_{k+1})\\
p(t,B_k) & p(t,B_{k+1})
\end{pmatrix}
=
\det\begin{pmatrix}
c_k & c_{k+1}\\
c_{k-1} & c_k
\end{pmatrix}
= c_k^2 - c_{k-1}c_{k+1}.
\]
By the Lindstr\"om--Gessel--Viennot lemma for two sources and two sinks \cite{GesselViennot1985,Lindstrom1973}, $\Delta_k$ equals the number of vertex-disjoint path pairs $(s\to B_k,\ t\to B_{k+1})$ minus the number of vertex-disjoint path pairs $(s\to B_{k+1},\ t\to B_k)$. In our planar embedding, $s$ lies above $t$ on the left, while $B_k$ lies above $B_{k+1}$ on the right. A planar noncrossing argument shows that there are no vertex-disjoint path pairs connecting $s$ to the lower sink $B_{k+1}$ and $t$ to the upper sink $B_k$: such a pair would force a crossing in the plane, which is impossible for vertex-disjoint directed paths in this acyclic planar network. Consequently the second term vanishes and $\Delta_k\ge 0$. Therefore $c_k^2\ge c_{k-1}c_{k+1}$ for $0\le k\le N-1$, and with the boundary convention $c_{-1}=c_{N+1}=0$ the same holds for $k=0$ and $k=N$. Thus $(c_0,c_1,\ldots,c_N)$ is log-concave. The result follows since the sequence has no internal zeros.
\end{proof}

Lemma~\ref{lem:unimodal_staircase_binomial_sum} paves the way for proving that the domination polynomials of threshold graphs are unimodal. We first recall how this ubiquitous class of graphs is constructed. A graph $T$ on $n$ vertices is a \emph{threshold graph} if there is an ordering of the vertex set $v_1,\dots,v_n$ such that for each $j$ the vertex $v_j$ is either isolated or dominating in the induced subgraph on $\{v_1,\dots,v_{j-1},v_j\}$. We call such an ordering a \emph{threshold ordering}. In a threshold ordering, the vertices added as dominating themselves form a clique and the vertices added as isolated form an independent set. We refer to these vertices throughout as vertices on the dominating side and vertices on the independent side, respectively. The structure of neighborhoods is closely connected to these sets. For instance, neighborhoods from the independent side into the clique side are nested: if $u$ and $w$ are vertices on the independent side with $u$ earlier than $w$ in the ordering, and $C$ is the set of vertices on the dominating side, then $N(u)\cap C\supseteq N(w)\cap C$. See \cite{MahadevPeled1995} for these standard properties.

Fix a threshold ordering $v_1,\dots,v_n$. List the vertices on the dominating side, in the order they appear, as $c_1,\dots,c_m$. Write $t_r$ for the index with $c_r=v_{t_r}$. For each $r$ write $P_r=\{v_1,\dots,v_{t_r-1}\}$, the vertices in $T$ appearing before $c_r$ does. Let $I$ be the set of vertices that were added on the isolated side. For each $r$, let $I_r\subseteq I$ be the set of vertices on the isolated side that appear after $c_r$ in the threshold ordering, so $I_r=\{v_j\in I:j>t_r\}$. The next lemma shows that dominating sets can be partitioned by their last vertex on the dominating side in the threshold ordering.

\begin{lemma}\label{lem:threshold-decomp}
Let $T$ be a threshold graph with a fixed threshold ordering as above. Every dominating set $S$ that contains at least one vertex from the dominating side of the threshold ordering of $T$ has a unique index $r$ such that $c_r$ is the last vertex on the dominating side of the threshold ordering that lies in $S$, and for this $r$ one has
\[
S=U \cup \{c_r\}\cup I_r
\]
for an arbitrary subset $U\subseteq P_r$. A dominating set of $T$ that contains no vertex from the dominating side of the threshold ordering exists if and only if there is at least one vertex on the isolated side of $T$ before $c_1$, and in that case the only such dominating set is the set of all vertices on the isolated side.
\end{lemma}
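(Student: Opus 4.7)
The plan is to exploit two characterizing facts about a threshold ordering: a vertex $c_r$ added on the dominating side is adjacent to every vertex of $P_r$, while a vertex added on the isolated side has no neighbors earlier in the ordering and so can only be adjacent to dominating-side vertices added after it (this is a standard consequence of the nestedness properties cited in the preceding paragraph). With these two observations in hand, each assertion of the lemma reduces to checking, for each vertex of $T$, which elements of a candidate $S$ can possibly dominate it.

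For the first assertion, suppose $S$ meets the dominating side and let $r$ be the unique index with $c_r \in S$ and $c_s \notin S$ for all $s>r$; existence and uniqueness are immediate. I would first show $I_r \subseteq S$: a vertex $v_j \in I_r$ has all its neighbors among the dominating-side vertices added after $v_j$, each of which is some $c_s$ with $s>r$, and none of these lie in $S$ by the choice of $r$, so $v_j$ must dominate itself. Conversely, I would verify that $S = U \cup \{c_r\} \cup I_r$ dominates $T$ for arbitrary $U \subseteq P_r$: every $v \in P_r$ is adjacent to $c_r \in S$; $c_r$ dominates itself; each later $c_s$ is adjacent to $c_r$ because $c_r \in P_s$ and $c_s$ is dominating when added; and every later isolated-side vertex lies in $I_r \subseteq S$. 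Combining the two directions forces $S=(S\cap P_r)\cup\{c_r\}\cup I_r$ and shows $U=S\cap P_r$ can be chosen freely.

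For the second assertion, assume $S$ contains no dominating-side vertex. Then every isolated-side vertex $v_j \in I$ has neighborhood contained in the dominating side and therefore disjoint from $S$, so $v_j$ must be in $S$; hence $S = I$. It remains to decide when $S=I$ actually dominates $T$. Each $c_r$ must have a neighbor in $I$, and the neighbors of $c_r$ lying in $I$ are precisely the isolated-side vertices preceding $c_r$ in the ordering, i.e.\ the elements of $P_r \cap I$. For this to hold at $c_1$ one needs $P_1 \cap I \neq \varnothing$, i.e., at least one isolated-side vertex before $c_1$. Conversely, if $P_1 \cap I \neq \varnothing$ then every $c_r$ is adjacent to each element of $P_1 \cap I \subseteq P_r \cap I \subseteq S$, so $S=I$ is indeed dominating, and it is the unique such set by the argument just given.

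I do not anticipate a serious technical obstacle; the main point that needs care is the claim that an isolated-side vertex has no in-$S$ neighbor arising from vertices earlier in the ordering. This rests on the precise definition of the isolated side: such a vertex has no earlier neighbors whatsoever, so its entire neighborhood lies strictly to its right and consists solely of dominating-side vertices added afterward. Once this structural point is isolated, the rest of the proof is a direct check.
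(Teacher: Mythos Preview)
Your proof is correct and follows essentially the same approach as the paper's: both identify $c_r$ as the last dominating-side vertex in $S$, use the fact that isolated-side vertices have all their neighbors strictly later on the dominating side to force $I_r\subseteq S$, and then check that $\{c_r\}$ alone dominates $P_r$ and all later $c_s$. The treatment of the case $S\subseteq I$ is likewise the same, reducing to whether $c_1$ has an isolated-side predecessor.
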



To illustrate Lemma~\ref{lem:threshold-decomp}, consider the threshold graph $T$ with vertices $v_1,v_2,\ldots,v_{14}$ listed in threshold order, whose dominating side consists of the vertices $\{v_3,v_5,v_9,v_{11}\}$, so $c_1=v_3, \ c_2=v_5, \ c_3=v_9,$ and $v_{11}=c_4$. The graph $T$ is illustrated in Figure~\ref{fig:four-prefix-joins}.

\begin{figure}[h]
\centering
\begin{tikzpicture}[
  x=0.85cm, y=1cm,
  vertex/.style={circle, fill=black, inner sep=0pt, minimum size=3.5pt},
  edge/.style={line width=0.45pt}
]

  \foreach \i in {1,...,14}{
    \node[vertex, label=below:{\scriptsize $v_{\i}$}] (v\i) at (\i,0) {};
  }

  \foreach \i in {3,5,9,11}{
    \pgfmathtruncatemacro{\imax}{\i-1}
    \foreach \j in {1,...,\imax}{
      \pgfmathsetmacro{\loose}{0.55 + 0.12*(\i-\j)}
      \draw[edge] (v\j.north) to[out=60,in=120,looseness=\loose] (v\i.north);
    }
  }

\end{tikzpicture}
\caption{A threshold graph with threshold order $v_1,v_2,\ldots,v_{14}$, whose dominating side is $v_3,v_5,v_9,v_{11}$.}
\label{fig:four-prefix-joins}
\end{figure}
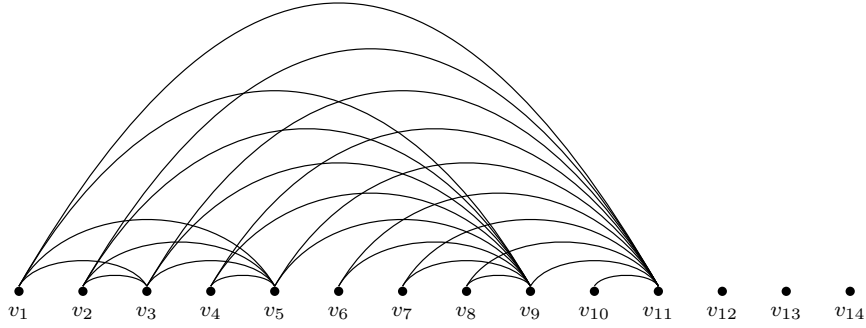

An example of a dominating set is $S=\{v_3,v_4,v_5,v_8,v_9,v_{10},v_{12},v_{13},v_{14}\}$. Notice that $c_3=v_9$ is the last vertex in $S$ on the dominating side (in threshold order). So we if decompose $S$ as $\{v_3,v_4,v_5,v_8\} \cup \{v_9\} \cup \{v_{10},v_{12},v_{13},v_{14}\}$ we have $S$ is of the form $U \cup \{c_3\} \cup I_3$ because $I_3=\{v_{10},v_{12},v_{13},v_{14}\}$. The vertices in $I_3$ have to be in $S$ because $c_3=v_9$ is the last vertex on the dominating side that is in $S$, so all vertices after it in threshold order that are on the independent side must be in it. Moreover $U \subseteq \{v_1,v_2,\ldots,v_8\} = P_3$. Replacing $U$ with any subset of $P_3$ will keep $S$ dominating because $c_3=v_9$ is adjacent to all vertices in $P_3$.

\begin{proof}[Proof of Lemma~\ref{lem:threshold-decomp}]
Let $S$ be a dominating set containing at least one vertex on the dominating side and let $c_r$ be the last vertex on the dominating side in the threshold ordering that is also in $S$. Consider a vertex $w$ on the isolated side that appears after $c_r$ in threshold order. At the moment $w$ is added in the threshold construction it is isolated from the previously built graph, so $w$ has no neighbors among $\{v_1,\dots,v_{t_r}\}$. Since all other vertices on the dominating side appear after $c_r=v_{t_r}$, and $c_r$ is the last vertex on the dominating side in $S$, the set $S$ contains no vertex on the dominating side that dominates $w$. Thus $w$ must belong to $S$. This shows $I_r\subseteq S$ and also $c_r\in S$.

Every vertex in the prefix $P_r$ is adjacent to $c_r$ because $c_r$ was added as a vertex on the dominating side, so $c_r$ dominates all vertices of $P_r$. Every vertex $c_s$ on the dominating side with $s>r$ is adjacent to $c_r$ because each vertex on the dominating side is adjacent to all earlier vertices. Thus every vertex outside $I_r$ is dominated by $c_r$. It follows that any set of the form $\{c_r\}\cup I_r\cup U$ with $U\subseteq P_r$ is dominating. Conversely, we have already shown that a dominating set $S$ with last vertex $c_r$ on the dominating side must contain $\{c_r\}\cup I_r$, and any additional vertices of $S$ must come from $P_r$. Therefore $S=\{c_r\}\cup I_r\cup U$ for some $U\subseteq P_r$. The index $r$ is unique because the last vertex on the dominating side in a given set is unique.

If $S$ contains no vertex on the dominating side, then $S\subseteq I$. Any vertex in $I\setminus S$ has no neighbor in $I$, so this forces $S$ to be $I$. The set $I$ dominates the first vertex $c_1$ on the dominating side if and only if $c_1$ has at least one neighbor in $I$, and in a threshold ordering $c_1$ is adjacent precisely to the vertices that appear before it. Hence $I$ is dominating if and only if there exists a vertex on the isolated side before $c_1$.
\end{proof}

Lemma \ref{lem:threshold-decomp} yields a closed form for the domination polynomial of a threshold graph.

\begin{lemma}\label{lem:closedform}
Let $T$ be a threshold graph. Let $c_1,\ldots,c_m$, $I$, $I_1,\ldots,I_m$ and $P_1,\ldots,P_m$ be as in Lemma~\ref{lem:threshold-decomp}. Then
\begin{equation}\label{eq:threshold-formula}
D(T,x)=\sum_{r=1}^m x^{\alpha_r}(1+x)^{\beta_r}+\varepsilon x^{|I|}.
\end{equation}
where for each $1 \leq r \leq m$, $\beta_r=|P_r|$, $\alpha_r=|I_r|+1$, and $\varepsilon \in \{0,1\}$ is $1$ if $I$ is a dominating set and $0$ otherwise.
\end{lemma}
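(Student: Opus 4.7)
The plan is to use Lemma~\ref{lem:threshold-decomp} as a bijective decomposition of the dominating sets of $T$, and then translate it directly into generating function language. That lemma partitions every dominating set $S$ of $T$ into two mutually exclusive categories: either $S$ contains at least one vertex on the dominating side, in which case there is a unique last such vertex $c_r$; or $S$ contains no vertex on the dominating side, in which case $S=I$ and such an $S$ exists precisely when $I$ is dominating.

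I would first handle the exceptional case. By the second part of Lemma~\ref{lem:threshold-decomp}, it contributes $\varepsilon x^{|I|}$ to $D(T,x)$, where $\varepsilon=1$ if $I$ is dominating and $\varepsilon=0$ otherwise. Next, for each fixed $r\in\{1,\ldots,m\}$, I would enumerate the dominating sets whose last vertex on the dominating side is $c_r$. By Lemma~\ref{lem:threshold-decomp}, these are parameterized by $U\subseteq P_r$ via the map $U\mapsto U\cup\{c_r\}\cup I_r$. Because $P_r$, $\{c_r\}$, and $I_r$ occupy disjoint positions in the threshold ordering, the three sets are pairwise disjoint, so the map is a bijection and sizes add as $|S|=|U|+1+|I_r|=|U|+\alpha_r$. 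Summing $x^{|S|}$ over this class therefore gives
\[
\sum_{U\subseteq P_r} x^{\alpha_r+|U|}
=x^{\alpha_r}\sum_{j=0}^{|P_r|}\binom{|P_r|}{j}x^j
=x^{\alpha_r}(1+x)^{\beta_r},
\]
using $\beta_r=|P_r|$.

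Summing over $r=1,\ldots,m$ and adding the exceptional contribution $\varepsilon x^{|I|}$ yields \eqref{eq:threshold-formula}. There is no serious obstacle here beyond bookkeeping: the decomposition lemma does the conceptual work, and the only things to verify are that the parameterization by $U$ is genuinely bijective (which follows from the pairwise disjointness of $P_r$, $\{c_r\}$, $I_r$) and that the exponents match the definitions $\alpha_r=|I_r|+1$ and $\beta_r=|P_r|$. The closed form then drops out of the standard binomial identity $\sum_{U\subseteq P_r}x^{|U|}=(1+x)^{|P_r|}$.
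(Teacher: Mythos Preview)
Your proposal is correct and follows essentially the same route as the paper: invoke Lemma~\ref{lem:threshold-decomp} to partition dominating sets by their last dominating-side vertex $c_r$ (plus the exceptional case $S=I$), use disjointness of $P_r$, $\{c_r\}$, $I_r$ to add sizes, and sum $\sum_{U\subseteq P_r}x^{|U|}=(1+x)^{|P_r|}$. The paper's proof is the same computation written as a single chain of equalities; your version simply makes the bijection and size bookkeeping more explicit.
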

\begin{proof}
By Lemma~\ref{lem:threshold-decomp}, 
\begin{align*}
D(T,x) &=\sum_{S \text{ a dominating set }} x^{|S|} = \sum_{r=1}^m \sum_{U \subseteq P_r} x^{|\{c_r\} \cup I_r \cup U|} + \varepsilon x^{|I|} \\
&=\sum_{r=1}^m x^{|\{c_r \cup I_r\}|} \left(\sum_{U \subseteq P_r} x^{|U|}\right) + \varepsilon x^{|I|} =\sum_{r=1}^m x^{\alpha_r}(1+x)^{\beta_r}+\varepsilon x^{|I|}.
\end{align*}
\end{proof}

The parameters in \eqref{eq:threshold-formula} satisfy a rigid staircase condition. Write $n_r=\alpha_r+\beta_r$ for the degree of the $r$th binomial summand. Moving from $c_r$ to $c_{r+1}$ shifts some vertices on the isolated side from the forced set $I_r$ into the optional prefix $P_{r+1}$ and adds one additional optional vertex corresponding to $c_r$ itself entering the prefix. Concretely, if there are $s\ge 0$ on the isolated side between $c_r$ and $c_{r+1}$ in the threshold ordering, then $\beta_{r+1}=\beta_r+s+1$ and $\alpha_{r+1}=\alpha_r-s$. Adding yields
\begin{equation}\label{eq:degree-step}
n_{r+1}=n_r+1.
\end{equation}


We can now conclude unimodality for threshold graphs.

\begin{theorem}\label{thm:thresholdUnimodal}
If $T$ is a threshold graph, then its domination polynomial $D(T,x)$ is unimodal.
\end{theorem}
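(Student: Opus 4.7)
The plan is to apply Lemma~\ref{lem:closedform} to rewrite $D(T,x)$ in the shifted-binomial form required by Lemma~\ref{lem:unimodal_staircase_binomial_sum}, and then verify the staircase hypotheses. I first dispose of the trivial case where $T$ has no dominating-side vertex: then $T$ is edgeless, $V(T)$ is the unique dominating set, and $D(T,x) = x^n$ is unimodal. Assuming hereafter that $m \geq 1$, Lemma~\ref{lem:closedform} gives
\[
D(T,x) = \sum_{r=1}^{m} x^{\alpha_r}(1+x)^{\beta_r} + \varepsilon\, x^{|I|},
\]
with $\alpha_r = |I_r|+1 \geq 1$ and $\beta_r = |P_r|$. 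The discussion preceding the theorem records $\alpha_{r+1} = \alpha_r - s_r$ and $\beta_{r+1} = \beta_r + s_r + 1$, where $s_r \geq 0$ is the number of isolated-side vertices strictly between $c_r$ and $c_{r+1}$, so $(\alpha_r)$ is weakly decreasing and $(\beta_r)$ is strictly increasing. Equation (\ref{eq:degree-step}) gives the degree-step condition $\alpha_{r+1}+\beta_{r+1} = \alpha_r+\beta_r+1$. Thus the pairs $(\alpha_r,\beta_r)_{r=1}^m$ satisfy every hypothesis of Lemma~\ref{lem:unimodal_staircase_binomial_sum}, and if $\varepsilon=0$ the theorem follows.

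For $\varepsilon=1$, Lemma~\ref{lem:threshold-decomp} guarantees that at least one isolated-side vertex appears before $c_1$, whence $|P_1|\geq 1$ and $|I| = |P_1|+|I_1| \geq |I_1|+1$. I absorb the extra summand $x^{|I|}$ into the staircase by defining $\alpha_0 := |I|$ and $\beta_0 := 0$, so that $x^{\alpha_0}(1+x)^{\beta_0} = x^{|I|}$ and $D(T,x) = \sum_{r=0}^{m} x^{\alpha_r}(1+x)^{\beta_r}$. The prepended pair fits the staircase: $\alpha_0 = |I| \geq |I_1|+1 = \alpha_1$, $\beta_0 = 0 < |P_1| = \beta_1$, $\alpha_0+\beta_0+1 = |I|+1 = \alpha_1+\beta_1$, and $\alpha_0 \geq 1$. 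Lemma~\ref{lem:unimodal_staircase_binomial_sum} then applies to the extended sum and finishes the proof.

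The main technical point is the $\varepsilon=1$ case: the extra $x^{|I|}$ is not itself of shifted-binomial form with $\beta \geq 1$, but it prepends to the staircase as the degenerate $\beta_0 = 0$ leading summand precisely because an all-isolated dominating set forces at least one isolated-side vertex before $c_1$. This single structural input from Lemma~\ref{lem:threshold-decomp} is the one place the argument requires care; everything else is bookkeeping once the staircase identities on $(\alpha_r,\beta_r)$ from the preamble are in hand.
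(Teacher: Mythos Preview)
Your proof is correct and follows essentially the same approach as the paper's: both invoke Lemma~\ref{lem:closedform}, dispose of $m=0$ as a monomial, apply Lemma~\ref{lem:unimodal_staircase_binomial_sum} directly when $\varepsilon=0$, and in the $\varepsilon=1$ case prepend the pair $(\alpha_0,\beta_0)=(|I|,0)$ to the staircase, using Lemma~\ref{lem:threshold-decomp} to guarantee $|P_1|\ge 1$ and hence the required inequalities.
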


\begin{proof}
Fix a threshold ordering and write $D(T,x)$ in the form \eqref{eq:threshold-formula} from Lemma~\ref{lem:closedform}, using the same notation therein for the involved sets. If there are no vertices on the dominating side then $D(T,x)=\varepsilon x^{|I|}$, which is a single monomial and hence unimodal, so assume $m\ge 1$. By construction one has $\alpha_1\ge\alpha_2\ge\cdots\ge\alpha_m\ge 1$ and $0\le \beta_1<\beta_2<\cdots<\beta_m$, and the degrees $n_r:=\alpha_r+\beta_r$ satisfy the staircase condition \eqref{eq:degree-step}. If $\varepsilon=0$, then $D(T,x)=\sum_{r=1}^{m} x^{\alpha_r}(1+x)^{\beta_r}$ is exactly of the form required by Lemma~\ref{lem:unimodal_staircase_binomial_sum}, and therefore has a unimodal coefficient sequence. If $\varepsilon=1$, define an additional pair $(\alpha_0,\beta_0):=(|I|,0)$ so that $\varepsilon x^{|I|}=x^{\alpha_0}(1+x)^{\beta_0}$. In this case Lemma~17 implies there is at least one vertex on the isolated side before $c_1$, hence $P_1\subseteq I$ is nonempty and $\beta_1=|P_1|\ge 1$, so $0=\beta_0<\beta_1$. Moreover $I$ is the disjoint union of $P_1$ and $I_1$, so $n_1=\alpha_1+\beta_1=(|I_1|+1)+|P_1|=|I|+1=n_0+1$. Together with \eqref{eq:degree-step} this gives $n_{r+1}=n_r+1$ for $r=0,\dots,m-1$, and we also have $\alpha_0=|I|\ge |I_1|+1=\alpha_1$ (hence $\alpha_0\ge\alpha_1\ge\cdots\ge\alpha_m$). Consequently, $D(T,x)=\sum_{r=0}^{m} x^{\alpha_r}(1+x)^{\beta_r}$ satisfies the hypotheses of Lemma~\ref{lem:unimodal_staircase_binomial_sum}, and therefore the coefficient sequence of $D(T,x)$ is unimodal.
\end{proof}

\section{Conclusion and further directions}

The overaching theme in this article is that tools from outside of domination polynomial literature can pave the way for new and rich perspectives that expand our understanding of these polynomials, particularly in the lens of resolving their unimodality. The three perspectives developed in this paper point toward several avenues for extending these results while keeping in line with this overarching theme.

\medskip\noindent
\textbf{Perspective 1: Beyond closed neighborhoods.}
The identity \eqref{eq:reciprocity} is a manifestation of a general principle: domination is a transversal condition in an associated hypergraph. Many variations in domination theory admit analogous hypergraph models, for example total domination (hitting open neighborhoods), distance-$r$ domination (hitting $r$-neighborhoods), and domination in directed graphs. Each gives rise to a natural polynomial that enumerates dominating objects. It would be interesting to systematically develop reciprocity identities and to identify which zero-free regions for hypergraph independence polynomials can be transferred to these variants. In particular, one can ask for analogues of Theorems~\ref{thm:linearRootBound} and \ref{thm:rayZeroFree} for total domination polynomials and for distance domination polynomials in classes of bounded degree.

\medskip\noindent
\textbf{Perspective 2: Higher order overlap corrections.}
The spanning tree correction in Proposition~\ref{prop:overlapBound} is a controlled inclusion--exclusion refinement that subtracts a collection of pairwise overlaps. In graphs with significant neighborhood repetition, such as graphs with large twin classes or nested neighborhoods, one expects higher-order overlaps to also be large. This suggests developing corrections based on larger structures (for example, bounded depth trees, hypertrees, or sparse complexes) that capture triple and higher intersections among the events $A_v$. Understanding the best possible overlap corrections for a given graph family, and how they interact with the Beaton-Brown criterion, is a promising direction for producing unimodality results in classes where the minimum degree is small but the neighborhood structure is highly redundant.

\medskip\noindent
\textbf{Perspective 3: Total positivity beyond threshold graphs.}
The threshold graph proof proceeds by decomposing dominating sets into disjoint families whose generating functions are shifted binomial rows forming a staircase. This invites the search for other graph classes admitting similar decompositions. A natural candidate for instance is split graphs, which are generalizations of threshold graphs. In such graphs it is plausible that dominating sets can be partitioned by the location of a ``last'' vertex in an appropriate ordering, producing sums of binomial pieces with controlled overlap. The algebraic Lemma~\ref{lem:unimodal_staircase_binomial_sum} is robust and should apply whenever such a staircase structure can be established. Furthermore, it is possible that generalizations of Lindstr\"{o}m-Gessel-Viennot such as that in the work of Hopkins and Zaimi (see \cite{HopkinsZaimi2023}) can resolve unimodality for more general classes beyond threshold graphs.

\medskip\noindent
\textbf{Toward hereditary unimodality results.}
Conjecture~\ref{conj:unimodal} is open even for trees. The three perspectives suggest three different approaches to hereditary classes: through the analytic properties of the closed neighborhood hypergraph, through probabilistic estimates on neighborhood-avoidance events at $k\approx n/2$, and through explicit decompositions into unimodal polynomial pieces. It would be interesting to understand whether these approaches can be combined. For example, chordal graphs and interval graphs often admit elimination orderings that enforce nested neighborhood structure, and they also have closed neighborhood hypergraphs with constrained intersection patterns. Whether these features can be leveraged to build staircase decompositions or effective overlap corrections, and thereby prove unimodality for new hereditary classes, remains an appealing open problem.

\medskip\noindent
\textbf{Concrete questions.}
We close with a few concrete questions motivated by the methods above.

\smallskip\noindent
(1) For a fixed $\Delta$, what is the best possible region in the complex plane that is guaranteed to be zero-free for all domination polynomials of graphs with maximum degree at most $\Delta$? The transfer from \cite{BencsBuys2025} is optimal for general bounded degree hypergraphs, but the closed neighborhood hypergraphs $\HH_G$ satisfy strong additional constraints. Can those constraints be exploited to enlarge the zero-free region beyond what is possible for arbitrary hypergraphs of the same maximum degree?

\smallskip\noindent
(2) The quantity $\tau_k(G)$ is defined through a maximum spanning tree problem and is therefore efficiently computable. Can one identify additional, efficiently computable corrections that incorporate higher-order overlaps
and yield substantially stronger unimodality criteria on sparse graph classes such as trees?

\smallskip\noindent
(3) How much can total positivity afford us? For instance, is there a way to prove split graphs have unimodal domination polynomials using total positivity? Split graphs generalize threshold graphs by relaxing the nested neighborhood condition, so an affirmative answer would extend Theorem~\ref{thm:thresholdUnimodal} to a significantly larger hereditary class. If not, can one use generalizations of Lindstr\"{o}m-Gessel-Viennot such as in \cite{HopkinsZaimi2023} to apply total positivity beyond threshold graphs?

\section{Acknowledgments}
The author is partially funded by research funds from York University, and NSERC Discovery Grant \#RGPIN-2025-06304.

\end{document}